\newtheorem{theorem}{{Theorem}}
\newtheorem{lemma}[theorem]{{Lemma}}
\newtheorem{remark}[theorem]{{Remark}}
\newtheorem{definition}[theorem]{{Definition}}
\newtheorem{proposition}[theorem]{{Proposition}}
\newtheorem{example}[theorem]{{Example}}
\def\d{{\mathrm d}}
\def\M{{\mathcal M}}
\title{Quasi-Lie bialgebroids, Dirac structures,\\and deformations of Poisson quasi-Nijenhuis manifolds}
\date{} 
\author{M.\ do Nascimento Luiz${}^{1}$, I.\ Mencattini${}^1$, M.\ Pedroni${}^{2,3}$}
\affil{
{\small $^1$Instituto de Ci\^encias Matem\'aticas e de Computa\c c\~ao,  Universidade de S\~ao Paulo, Brazil}\\
{\small murilo.luiz@usp.br, igorre@icmc.usp.br 
}\\
\medskip
{\small $^2$Dipartimento di Ingegneria Gestionale, dell'Informazione e della Produzione,  Universit\`a di Bergamo, Italy}\\
{\small marco.pedroni@unibg.it 
}\\
\medskip
{\small $^3$INFN, Sezione di Milano-Bicocca, Piazza della Scienza 3, Milano, Italy}
}
\begin{document}

\maketitle
\abstract{We show how to deform a Poisson quasi-Nijenhuis manifold by means of a closed 2-form. Then we interpret this 
procedure in the context of quasi-Lie bialgebroids, as a particular case of the so called twisting of a quasi-Lie bialgebroid. Finally, we frame our result in the setting of Courant algebroids and Dirac structures.}




\baselineskip=0,6cm

\section{Introduction}
A tensor $N$ 
of type $(1,1)$ on a manifold $\M$ is called a 
Nijenhuis operator 
if its 
Nijenhuis torsion 
vanishes --- see (\ref{tndef1}). Such a geometrical object was introduced in \cite{Nijenhuis51} and is still the subject of interesting investigations 
(see \cite{BKM2022} and references therein). Moreover, it is very useful in the theory of integrable systems thanks to the notion of 
Poisson-Nijenhuis (PN) manifold \cite{KM,MagriMorosiRagnisco85}, i.e., a manifold endowed with a Poisson tensor $\pi$ and a Nijenhuis operator $N$ fulfilling suitable compatibility conditions --- see (\ref{N-P-compatible}). This approach to integrability was put in a more general context in 
\cite{Dorfman1987,Dorfman-book}, using suitable pairs of Dirac structures.

Poisson quasi-Nijenhuis (PqN) manifolds \cite{SX} are a generalization of PN manifolds, where $\pi$ and $N$ are still compatible but the Nijenhuis torsion of $N$ is just required to be controlled by a suitable 3-form (so it is not necessarily zero). Some preliminary results about the relevance 
of PqN manifolds in the study of integrable systems have recently been obtained in \cite{FMOP2020,FMP2023}. Much remains to be done as far as the construction of a family of functions in involution is concerned.

As shown in \cite{YKS96}, one can associate a Lie bialgebroid \cite{MX1994} 
to a PN manifold. In a similar way \cite{SX}, a quasi-Lie bialgebroid \cite{Roytenberg2002} can be associated to a PqN manifold. A general setting to describe Lie bialgebroids and quasi-Lie bialgebroids is supplied by the notions of Courant algebroid and Dirac structure 
\cite{Courant1990,LWX} --- for the definition of Courant algebroid, see Appendix A and references therein.
Indeed, Lie bialgebroids 
are in one-to-one correspondence with pairs of transversal Dirac structures in a Courant algebroid. If one of the transversal structure is not Dirac but Lagrangian, one obtains a quasi-Lie bialgebroid (see, e.g., \cite{SX} and references therein).
Moreover, in \cite{ILX} (following \cite{Roytenberg2002}) it is shown how to deform a quasi-Lie bialgebroid $(A,A^*)$ thanks to the so called twisting by a section of $\bigwedge^2 A$.

The aim of this paper is to present a result about the deformation of a PqN manifold into another PqN manifold, by means of a closed 2-form.  This is a first step to generalize the ideas in \cite{Dorfman-book}, with the aim of constructing (under suitable assumptions) an involutive family of functions on a PqN manifold. We will give a direct proof of our result in Subsection \ref{subsec:PqN}, a proof in the setting of twists of quasi-Lie bialgebroids
in Subsection \ref{subsec:qLba}, and a proof in the context of Courant algebroids and Dirac structures in Section \ref{sec:Courant-Dirac}.

\par\medskip\noindent
{\bf Conventions and notations.} 
Hereafter all manifolds will be smooth of class $C^\infty$ and defined over the real numbers $\mathbb R$. In the same vein, all vector bundles considered will be real and smooth of class $C^\infty$ and their sections will be always considered smooth of the same class. As far as the notation is concerned, a vector bundle whose total space, base and canonical projection are $E$, $\mathcal M$ and, respectively, $p$ will be denoted by the triple $(E,p,\mathcal M)$ or, more simply, by $E$, if this will not be cause of confusion. The space of (global) sections of $E$ will be denoted by $\Gamma(E)$.
For the general notions about Lie algebroids used hereafter we refer the reader to the monograph \cite{libro-Mackenzie}.

\par\medskip\noindent
{\bf Acknowledgments.}
We thank Gregorio Falqui for useful discussions. 
We are grateful to the anonymous referee, whose suggestions helped us to substantially improve the content and the presentation of our manuscript.
MDNL thanks the {\em Department of Mathematics and its Applications\/} of the University of Milano-Bicocca and the {\em Department of Management, Information and Production Engineering\/} of the University of Bergamo for their hospitality, and 
Coordena\c c\~ao de Aperfei\c coamento de Pessoal de N\'ivel Superior - CAPES, Brazil, for supporting him with the grants 88887.506201/2020-00, 88887.695676/2022-00 and  88887.911537/2023-00.
MP thanks the ICMC-UPS, {\em Instituto de Ci\^encias Matem\'aticas e de Computa\c c\~ao\/} of the University of S\~ao Paulo, 
for its hospitality 
and Funda\c c\~ao de Amparo \`a Pesquisa do Estado de S\~ao Paulo - FAPESP, Brazil, for supporting his visit to the ICMC-USP with the grant 2022/02454-8. We also gratefully acknowledge the auspices of the GNFM section of INdAM, under which part of this work was carried out, and the financial support of the MMNLP project - CSN4 of INFN, Italy.



\section{Poisson quasi-Nijenhuis manifolds and quasi-Lie bialgebroids}
In the first part of this section we recall the definition of Poisson quasi-Nijenhuis 
manifold and we present a result, 
generalizing that in \cite{FMP2023}, concerning the deformations of PqN manifolds.
The second part is devoted to a recollection of definitions and results on quasi-Lie bialgebroids, and to an alternative proof of our result.

\subsection{Poisson quasi-Nijenhuis manifolds}
\label{subsec:PqN}

First of all, recall (see, e.g., \cite{KM}) that any bivector $\pi$ on a manifold $\M$ induces a bracket 
on the space $\Gamma(T^*\M)$ of 1-forms, given by
\begin{equation}
\label{eq:liealgpi}
[\alpha,\beta]_\pi=L_{\pi^\sharp\alpha}\beta-L_{\pi^\sharp\beta}\alpha-\d\langle\beta,\pi^\sharp\alpha\rangle,
\end{equation}
where $\pi^\sharp:T^*\M\to T\M$ is defined as $\langle \beta,\pi^\sharp\alpha\rangle=\pi(\alpha,\beta)$, and that (\ref{eq:liealgpi}) is a Lie bracket if and only if $\pi$ is Poisson. Suppose that it is so, and extend 
(\ref{eq:liealgpi}) to a degree $-1$, $\mathbb R$-bilinear bracket on $\Gamma(\bigwedge^{\bullet}T^*\M)$, still denoted by $[\cdot,\cdot]_\pi$, such that for all $\eta\in\Gamma(\bigwedge^qT^\ast\mathcal M)$ and $\eta'\in\Gamma(\bigwedge^{q'}T^\ast\mathcal M)$ 
\begin{itemize}
\item[(K1)] $[\eta,\eta']_\pi=-(-1)^{(q-1)(q'-1)}[\eta',\eta]_\pi$; 
\item[(K2)] $[\alpha,f]_\pi=i_{\pi^\sharp\alpha}\,\d f=\langle \d f,\pi^\sharp\alpha\rangle$ for all $f\in C^\infty(\M)$ and for all 1-forms $\alpha$;
\item[(K3)] 
$[\eta,\cdot]_\pi$ 
is a graded derivation of $(\Gamma(\bigwedge^\bullet T^\ast\mathcal M),\wedge)$, that is, 
for any differential form $\eta''$,
\begin{equation}
\label{deriv-koszul}
[\eta,\eta'\wedge\eta'']_\pi=[\eta,\eta']_\pi\wedge\eta''+(-1)^{(q-1)q'}\eta'\wedge[\eta,\eta'']_\pi.
\end{equation}
\end{itemize}
It follows that the graded Jacobi identity,
\begin{equation}
\label{graded-jacobi}
(-1)^{(q_1-1)(q_3-1)}[\eta_1,[\eta_2,\eta_3]_\pi]_\pi+(-1)^{(q_2-1)(q_1-1)}[\eta_2,[\eta_3,\eta_1]_\pi]_\pi+(-1)^{(q_3-1)(q_2-1)}[\eta_3,[\eta_1,\eta_2]_\pi]_\pi=0,
\end{equation}
holds (see, e.g., \cite{Marle2008}, Proposition 5.4.9), where $q_i$ is the degree of $\eta_i$, 
and that, for any differential form $\eta$ and for any $f\in C^\infty(\M)$, 
\begin{equation}
\label{K2pertutte}
[f,\eta]_\pi=i_{\pi^\sharp \d f}\,\eta.
\end{equation}
\begin{remark}\label{rem:remark1}
The bracket on $\Gamma(\bigwedge^\bullet T^\ast\mathcal M)$ introduced above is an instance of the so called \emph{generalized Schouten bracket}, which can be defined on the exterior algebra of the vector space $\Gamma(A)$ of any Lie algebroid $(A,[\cdot,\cdot]_A,\rho_A)$, see pages 418-419 in \cite{MX1994}. The generalized Schouten bracket associated to the Lie algebroid $(T\mathcal M,[\cdot,\cdot],\text{id})$ is the usual Schouten bracket defined on $\Gamma(\bigwedge^\bullet T\mathcal M)$, while the one stemming from (\ref{eq:liealgpi}) is associated to the Lie algebroid defined by the Poisson bivector field $\pi$.
Note that the sign conventions in the above formulas are the same as in \cite{MX1994} and in agreement with the ones adopted in 
\cite{Dorfman-book}, see Proposition 2.13. Indeed, if read at the level of the generalized Schouten bracket $[\cdot,\cdot]_\pi$, it becomes, 
for 1-forms $\alpha_i$ and $\beta_j$, 
\begin{eqnarray}
&&[\alpha_1\wedge\cdots\wedge\alpha_n,\beta_1\wedge\cdots\wedge\beta_m]_\pi\nonumber\\
&&\qquad=\sum_{i=1}^n\sum_{j=1}^m(-1)^{i+j}[\alpha_i,\beta_j]_\pi\wedge\alpha_1\wedge\cdots\wedge\hat{\alpha}_i\wedge\cdots\wedge\alpha_n\wedge\beta_1\wedge\cdots\wedge\hat{\beta}_j\wedge\cdots\wedge\beta_m,\nonumber
\end{eqnarray}
that can be obtained applying iteratively (K1) and (K3) to its left-hand side.
\end{remark}

The following example points to a useful identity.
\begin{example}
If $\Omega\in\Gamma(\bigwedge^2T^\ast\mathcal M)$ and $\Omega^\flat:T\M\to T^*\M$ is defined as usual by 
$\Omega^\flat(X)=i_X\Omega$, then for all $X,Y,Z\in\Gamma (T\mathcal M)$
\begin{equation}
\label{eq:dorfman}
[\Omega,\Omega]_\pi(X,Y,Z)=2\sum_{\circlearrowleft(X,Y,Z)}\left(\langle[\Omega^\flat X,\Omega^\flat Y]_\pi,Z\rangle)
-{\mathcal L}_{\pi^\sharp\Omega^\flat X}(\Omega(Y,Z))\right),
\end{equation}
which follows by applying to our setting Formula (2.31) and Proposition 2.15 in \cite{Dorfman-book}.
\end{example}
%
%
%
%
%

Recall now that a Poisson tensor $\pi$ and a $(1,1)$ tensor field $N:T\M\to T\M$ on $\M$ are said to be {\it compatible\/} \cite{KM,MagriMorosiRagnisco85} if
\begin{equation}
\label{N-P-compatible}
N\pi^\sharp=\pi^\sharp N^*
\quad\mbox{and}\quad
[\alpha,\beta]_{\pi_N}=[N^*\alpha,\beta]_{\pi}+[\alpha,N^*\beta]_{\pi}-N^*[\alpha,\beta]_{\pi}\ 
\mbox{for all 1-forms $\alpha,\beta$,} 
\end{equation}
where $N^*:T^*\M\to T^*\M$ is the transpose of $N$ and 
$\pi_N$ is the bivector field defined by $\pi_N^\sharp=N\pi^\sharp$ (notice that it is a bivector thanks to the first condition).

Finally, given a $p$-form $\alpha$ with $p\ge 1$, the $p$-form $i_N\alpha$ is defined as 
\begin{equation}
\label{iNalpha}
i_N\alpha(X_1,\dots,X_p)=\sum_{i=1}^p \alpha(X_1,\dots,NX_i,\dots,X_p),
\end{equation}
while $i_N f=0$ for all functions $f$. After introducing 
\begin{equation}
\label{eq:dNd}
\d_N=i_N\circ \d-\d\circ i_N,
\end{equation}
it can be proved \cite{YKS96} that the compatibility between a Poisson tensor $\pi$ and a $(1,1)$ tensor field $N$ is equivalent 
to requiring that $\d_N$ is a derivation of $[\cdot,\cdot]_\pi$, a fact that we will often use in the rest of the paper. Moreover, for any $k$-form 
$\eta$ one has that 
\begin{equation}
\label{def:d_N}
\begin{aligned}
	(\d_N\eta)(X_1,\ldots,X_{k+1})&= 
	\sum_{i=1}^{k+1}(-1)^{i+1}{\mathcal L}_{NX_i} \left( {\eta(X_1,\ldots,\hat{X_i},\ldots,X_{k+1})} \right) \\
	&+\sum_{i <j} (-1)^{i+j}\eta([X_i,X_j]_N, X_1 \ldots, \hat{X_i}, \ldots ,\hat{X_j}, \ldots , X_{k+1}),
\end{aligned}
\end{equation}
where
\begin{equation}
\label{bracketNdef}
[X,Y]_N=[NX,Y]+[X,NY]-N[X,Y].
\end{equation}

In \cite{SX} a {\it Poisson quasi-Nijenhuis (PqN) manifold\/} was defined as a quadruple $(\M,\pi,N,\phi)$ such that:
\begin{itemize}
\item the Poisson bivector $\pi$ and the $(1,1)$ tensor field $N$  
are compatible;
\item the 3-forms $\phi$ and $i_N\phi$ are closed;
\item $T_N(X,Y)=\pi^\sharp\left(i_{X\wedge Y}\phi\right)$ for all vector fields $X$ and $Y$, where $i_{X\wedge Y}\phi$ is the 1-form defined as $\langle i_{X\wedge Y}\phi,Z\rangle=\phi(X,Y,Z)$, and  
\begin{equation}
\label{tndef1}
T_N(X,Y)=[NX,NY]-N[X,Y]_N
\end{equation}
is the {\it Nijenhuis torsion\/} of $N$.
\end{itemize}
A slightly more general definition of PqN manifold was recently proposed in \cite{BursztynDrummond2019} --- 
see also \cite{BursztynDrummondNetto2021}, where the PqN structures are recast in the more general framework of the Dirac-Nijenhuis ones.
Another interesting generalization, given by the so called PqN manifolds with background, was considered in
\cite{Antunes2008,C-NdC-2010}.

If $\phi=0$, then the torsion of $N$ vanishes and $\M$ becomes a {\it Poisson-Nijenhuis (PN) manifold} (see \cite{KM} and references therein). 
In this case, 
$\pi_N$ 
is a Poisson tensor compatible with $\pi$, so that $(\M,\pi,\pi_N)$ is a bi-Hamiltonian manifold. 

The following theorem generalizes a result in \cite{FMP2023}, where the starting point was a PN manifold. 
\begin{theorem}
\label{thm:gim}
Let $(\M,\pi,N,\phi)$ be a PqN manifold and let $\Omega$ be a closed 2-form.
If $\widehat N=N+\pi^\sharp\,\Omega^\flat$
and 
\begin{equation}
\label{phi-hat}
\widehat\phi=\phi+\d_N\Omega+\frac{1}{2}[\Omega,\Omega]_\pi,
\end{equation} 
then $(\M,\pi,\widehat N,\widehat\phi)$ is a PqN manifold. In particular, if $\Omega$ is a solution of the 
non homogeneous Maurer-Cartan equation
\begin{equation}\label{eq:homeq}
\d_N\Omega+\frac{1}{2}[\Omega,\Omega]_\pi=-\phi,
\end{equation}
then $(\M,\pi,\widehat N)$ is a PN manifold.
\end{theorem}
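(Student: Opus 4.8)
The cleanest route is to rephrase the PqN axioms in terms of the operator $\d_N$. From the excerpt, compatibility of $\pi$ and $N$ amounts to $\d_N$ being a derivation of $[\cdot,\cdot]_\pi$; and, granting this together with $\d\phi=0$, the two remaining axioms ($i_N\phi$ closed, and the torsion identity $T_N(X,Y)=\pi^\sharp(i_{X\wedge Y}\phi)$) are equivalent to $\d_N\phi=0$ and to $\d_N^2=[\phi,\cdot]_\pi$, respectively. The last equivalence is obtained by testing on functions: both $\d_N^2$ and $[\phi,\cdot]_\pi$ are degree-$2$ derivations of $(\Gamma(\bigwedge^\bullet T^*\M),\wedge)$ that commute with $\d$ (for $[\phi,\cdot]_\pi$ because $\d\phi=0$ and $\d$ is a derivation of $[\cdot,\cdot]_\pi$), hence are determined by their restriction to $C^\infty(\M)$, and on a function one reads off from (\ref{def:d_N}) and (\ref{tndef1}) that $(\d_N^2 f)(X,Y)=\langle\d f,T_N(X,Y)\rangle$, which after unravelling the definitions of $\pi^\sharp$, $i_{X\wedge Y}$ and (\ref{K2pertutte}) equals $[\phi,f]_\pi(X,Y)$ exactly when the torsion axiom holds. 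So I must check, for the hatted data, the four facts: $\d_{\widehat N}$ is a derivation of $[\cdot,\cdot]_\pi$; $\d\widehat\phi=0$; $\d_{\widehat N}\widehat\phi=0$; and $\d_{\widehat N}^2=[\widehat\phi,\cdot]_\pi$. (I would also record in passing that $\widehat N\pi^\sharp=\pi^\sharp\widehat N^*$: since $\pi^\sharp$ and $\Omega^\flat$ are skew, $(\pi^\sharp\Omega^\flat)^*=\Omega^\flat\pi^\sharp$, so $\pi^\sharp\widehat N^*=\pi^\sharp N^*+\pi^\sharp\Omega^\flat\pi^\sharp=N\pi^\sharp+\pi^\sharp\Omega^\flat\pi^\sharp=\widehat N\pi^\sharp$ by (\ref{N-P-compatible}), which moreover shows $\pi_{\widehat N}$ is a genuine bivector.) The ``in particular'' part is then immediate: under (\ref{eq:homeq}) one has $\widehat\phi=0$, hence $T_{\widehat N}=0$ and $(\M,\pi,\widehat N)$ is a PN manifold.

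The hinge of the whole argument is the identity
\[
\d_{\widehat N}\eta=\d_N\eta+[\Omega,\eta]_\pi\qquad\text{for every form }\eta .
\]
Since $i_{(\cdot)}$, hence $\d_{(\cdot)}$, is linear in its $(1,1)$-tensor argument, $\d_{\widehat N}=\d_N+\d_{\pi^\sharp\Omega^\flat}$, so it suffices to see $\d_{\pi^\sharp\Omega^\flat}=[\Omega,\cdot]_\pi$; this holds because $\Omega$ is closed. Both sides are degree-$1$ derivations of $\wedge$, so one tests on functions and on exact $1$-forms: on $f$ one has $\d_{\pi^\sharp\Omega^\flat}f=i_{\pi^\sharp\Omega^\flat}\,\d f=i_{\pi^\sharp \d f}\Omega=[\Omega,f]_\pi$ by (\ref{K2pertutte}) and (K1); on $\d f$ one has $\d_{\pi^\sharp\Omega^\flat}(\d f)=-\d(\d_{\pi^\sharp\Omega^\flat}f)$ by (\ref{eq:dNd}) and $\d^2=0$, while $[\Omega,\d f]_\pi=-\d[\Omega,f]_\pi$ because $\d$ is a derivation of $[\cdot,\cdot]_\pi$ and $\d\Omega=0$, so the two sides again agree. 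Granting the identity, $\d_{\widehat N}$ is a sum of two derivations of $[\cdot,\cdot]_\pi$ — namely $\d_N$ (compatibility of $\pi$ and $N$) and $[\Omega,\cdot]_\pi$ (this is exactly the graded Jacobi identity (\ref{graded-jacobi}) with one slot fixed at $\Omega$) — hence itself a derivation, and $\pi,\widehat N$ are compatible.

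It remains to treat the three conditions on $\widehat\phi=\phi+\d_N\Omega+\tfrac12[\Omega,\Omega]_\pi$ (see (\ref{phi-hat})). Since $\d$ anticommutes with $\d_N$ (immediate from (\ref{eq:dNd}) and $\d^2=0$) and is a derivation of $[\cdot,\cdot]_\pi$, from $\d\phi=0$, $\d\Omega=0$ we get $\d\widehat\phi=-\d_N\d\Omega+\tfrac12\big([\d\Omega,\Omega]_\pi-[\Omega,\d\Omega]_\pi\big)=0$. Next, expanding $\d_{\widehat N}\widehat\phi=(\d_N+[\Omega,\cdot]_\pi)\widehat\phi$ and using $\d_N\phi=0$, the Leibniz rule of $\d_N$ over $[\cdot,\cdot]_\pi$ (which gives $\tfrac12\d_N[\Omega,\Omega]_\pi=[\d_N\Omega,\Omega]_\pi=-[\Omega,\d_N\Omega]_\pi$ via (K1)), the Jacobi consequence $[\Omega,[\Omega,\Omega]_\pi]_\pi=0$, and $\d_N^2\Omega=[\phi,\Omega]_\pi$, everything cancels and $\d_{\widehat N}\widehat\phi=[\phi,\Omega]_\pi+[\Omega,\phi]_\pi=0$ by (K1); as $\d\widehat\phi=0$, this is equivalent to $i_{\widehat N}\widehat\phi$ being closed. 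Finally, squaring $\d_{\widehat N}=\d_N+[\Omega,\cdot]_\pi$ and using $\d_N^2=[\phi,\cdot]_\pi$, $\d_N\circ[\Omega,\cdot]_\pi+[\Omega,\cdot]_\pi\circ\d_N=[\d_N\Omega,\cdot]_\pi$ (Leibniz again), and $([\Omega,\cdot]_\pi)^2=\tfrac12[[\Omega,\Omega]_\pi,\cdot]_\pi$ (from (\ref{graded-jacobi})), one finds $\d_{\widehat N}^2=\big[\phi+\d_N\Omega+\tfrac12[\Omega,\Omega]_\pi,\ \cdot\,\big]_\pi=[\widehat\phi,\cdot]_\pi$, which, by the reformulation above, is the torsion identity $T_{\widehat N}(X,Y)=\pi^\sharp(i_{X\wedge Y}\widehat\phi)$. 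This completes the verification.

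The main obstacle I anticipate is bookkeeping rather than conceptual: one must be scrupulous with the signs in (K1), in the Leibniz rule of $\d_N$ and of $[\Omega,\cdot]_\pi$ over the degree-$(-1)$ bracket $[\cdot,\cdot]_\pi$, and with the various form degrees involved; and one must pin down the reformulation $\d_N^2=[\phi,\cdot]_\pi$ of the torsion axiom with the correct sign — the point where the normalizations of $\pi^\sharp$, $\Omega^\flat$ and $i_{X\wedge Y}\phi$ all interact. Should one prefer a fully explicit proof instead, the same identity $\d_{\widehat N}=\d_N+[\Omega,\cdot]_\pi$ together with the bilinear expansion $T_{\widehat N}=T_N+T_{\pi^\sharp\Omega^\flat}+[N,\pi^\sharp\Omega^\flat]_{\mathrm{FN}}$ (from (\ref{tndef1})--(\ref{bracketNdef})) does the job: $T_N(X,Y)=\pi^\sharp(i_{X\wedge Y}\phi)$ is the hypothesis, $T_{\pi^\sharp\Omega^\flat}(X,Y)=\tfrac12\pi^\sharp(i_{X\wedge Y}[\Omega,\Omega]_\pi)$ follows from (\ref{eq:dorfman}) and $\d\Omega=0$, and the delicate term is the mixed Fr\"olicher--Nijenhuis concomitant, which one must recognize as $[N,\pi^\sharp\Omega^\flat]_{\mathrm{FN}}(X,Y)=\pi^\sharp(i_{X\wedge Y}\d_N\Omega)$.
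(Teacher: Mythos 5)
Your proposal is correct and follows essentially the same route as the paper's direct proof: the identity $\d_{\widehat N}=\d_N+[\Omega,\cdot]_\pi$ (valid because $\d\Omega=0$), followed by the verifications $\d\widehat\phi=0$, $\d_{\widehat N}\widehat\phi=0$ and $\d_{\widehat N}^2=[\widehat\phi,\cdot]_\pi$, which are exactly the paper's computations. The only (harmless) difference is that you supply your own derivation-determined-on-functions argument for the equivalence between $\d_N^2=[\phi,\cdot]_\pi$ and the torsion identity, where the paper simply invokes Lemma 3.7 of \cite{SX}.
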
\noindent
\begin{proof} It is similar to the one given in \cite{FMP2023}, corresponding to the case $\phi=0$. We present here only the main points, focussing on the differences.
  
To prove the compatibility between $\pi$ and $\widehat N$, we notice that $\d\Omega=0$ implies
\begin{equation}
\label{eq:monella}
\d_{\pi^\sharp\,\Omega^{\flat}}=[\Omega,\cdot]_\pi.
\end{equation}
In fact, both $\d_{\pi^\sharp\,\Omega^{\flat}}$ and $[\Omega,\cdot]_\pi$ are graded derivation of $(\Omega^\bullet(\M),\wedge)$, anticommuting with $\d$ and coinciding on $C^\infty(\M)$ --- see  \cite{FMP2023} for details. The previous identity entails that
$$
\d_{\widehat N}=\d_{N+\pi^\sharp\,\Omega^{\flat}}=\d_N+[\Omega,\cdot]_\pi.
$$
Hence $\d_{\widehat N}$ is a derivation of $[\cdot,\cdot]_\pi$, yielding the compatibility between $\widehat N$ and $\pi$.

The closedness of $\widehat\phi$ easily follows from that of $\phi$ and $\Omega$, recalling that $\d\circ \d_N=-\d_N\circ \d$ and that $\d$ is a derivation of $[\cdot,\cdot]_\pi$. Moreover,  
\begin{equation}\label{eq:1d}
\begin{aligned}
\d_{\widehat N}\widehat\phi&=(\d_N+[\Omega,\cdot]_\pi)(\phi+\d_N\Omega+\frac{1}{2}[\Omega,\Omega]_\pi)\\
&=\d_N\phi+\d_N^2\Omega+
\frac12[\d_N\Omega,\Omega]_\pi-\frac12[\Omega,\d_N\Omega]_\pi+[\Omega,\phi]_\pi
+[\Omega,\d_N\Omega]_\pi+\frac{1}{2}[\Omega,[\Omega,\Omega]_\pi]_\pi\\
&=[\phi,\Omega]_\pi+[\d_N\Omega,\Omega]_\pi+[\Omega,\phi]_\pi
+[\Omega,\d_N\Omega]_\pi+\frac{1}{2}[\Omega,[\Omega,\Omega]_\pi]_\pi=0,
\end{aligned}
\end{equation} 
thanks to $\d_N^2=[\phi,\cdot]_\pi$, the fact that $\d_N$ is a derivation of $[\cdot,\cdot]_\pi$, the commutation rule (K1), and the graded Jacobi identity (\ref{graded-jacobi}). Furthermore, observe that $\d_N\phi=0$ as a consequence of (\ref{eq:dNd}).
Then $i_{\widehat N}\widehat\phi$ is closed because of (\ref{eq:dNd}) and $\d\widehat\phi=0$. 

Finally, if $\alpha$ is any differential form, 
\begin{equation}
\label{d_N^2}
\begin{aligned}
\d_{\widehat N}^2\alpha&=(\d_N+[\Omega,\cdot]_\pi)(\d_N\alpha+[\Omega,\alpha]_\pi)\\
\noalign{\medskip}
&=\d_N^2\alpha+[\d_N\Omega,\alpha]_\pi-[\Omega,\d_N\alpha]_\pi+[\Omega,\d_N\alpha]_\pi+[\Omega,[\Omega,\alpha]_\pi]_\pi\\
&=
[\phi,\alpha]_\pi+[\d_N\Omega,\alpha]_\pi+\frac{1}{2}[[\Omega,\Omega]_\pi,\alpha]_\pi\\
&=[\widehat\phi,\alpha]_\pi.
\end{aligned}
\end{equation}
As shown in Lemma 3.7 of \cite{SX}, this implies that
$T_{\widehat N}(X,Y)=\pi^\sharp\left(i_{X\wedge Y}\widehat\phi\right)$ 
for all vector fields $X,Y$.
\end{proof}

\begin{remark}\label{rem:close1} If $\mathfrak G_{\Omega^2_c}$ is the (additive) group of closed 2-forms on $\M$, the theorem above implies that the set of the PqN-structures on $\M$ carries the following $\mathfrak G_{\Omega^2_c}$-action: 
\begin{equation}
\Omega.(\M,\pi,N,\phi)=(\M,\pi,N+\pi^\sharp\Omega^\flat,\phi+\d_N\Omega+\frac{1}{2}[\Omega,\Omega]_\pi).\label{eq:act1}
\end{equation}
In fact, if $\Omega_1,\Omega_2\in\mathfrak G_{\Omega^2_c}$, then 
\begin{eqnarray*}
&&\Omega_1.(\Omega_2.(\M,\pi,N,\phi))\\
&&\qquad=\Omega_1.(\M,\pi,N+\pi^\sharp\Omega_2^\flat,\phi+\d_N\Omega_2+\frac{1}{2}[\Omega_2,\Omega_2]_\pi))\\
&&\qquad=(\M,\pi,N+\pi^\sharp(\Omega_1+\Omega_2)^\flat,\phi+\d_N\Omega_2+\frac{1}{2}[\Omega_2,\Omega_2]_\pi+\d_{N+\pi^\sharp\Omega_2^\flat}\Omega_1+\frac{1}{2}[\Omega_1,\Omega_1]_\pi)\\
&&\qquad=(\M,\pi,N+\pi^\sharp(\Omega_1+\Omega_2)^\flat,\phi+\d_N(\Omega_1+\Omega_2)+\frac{1}{2}[\Omega_1+\Omega_2,\Omega_1+\Omega_2]_\pi),
\end{eqnarray*}
where the last equality was obtained observing that $\d_{N+\pi^\sharp\Omega_2}=\d_N+\d_{\pi^\sharp\Omega_2}=\d_N+[\Omega_2,\cdot]_\pi$ since $\d\Omega_2=0$, and $[\Omega_1,\Omega_2]_\pi=[\Omega_2,\Omega_1]_\pi$, see Property (K1).
\end{remark}

The aim of the rest of the paper is two give two alternative proofs of Theorem \ref{thm:gim}, in the frameworks of quasi-Lie bialgebroids (see next subsection) and Dirac structures (see Section \ref{sec:Courant-Dirac}).

\subsection{Quasi-Lie bialgebroids}
\label{subsec:qLba}
Suppose that $p:A\to\M$ is a Lie algebroid (see, e.g., \cite{libro-Mackenzie}) with anchor $\rho_A:A\to T\M$ and Lie bracket $[\cdot,\cdot]_A$, 
defined on the space of sections $\Gamma(A)$ and then extended to $\Gamma(\bigwedge^\bullet A)$, see Remark \ref{rem:remark1}. 
Recall that, given $P\in\Gamma(\bigwedge^k A^*)$, 
one can define $\d_A P\in\Gamma(\bigwedge^{k+1} A^*)$ as 
\begin{equation}
\label{def:d_A}
\begin{aligned}
	(\d_A P)(\alpha_1,\ldots,\alpha_{k+1})&= 
	\sum_{i=1}^{k+1}(-1)^{i+1}\rho_A(\alpha_i) \left( {P(\alpha_1,\ldots,\hat{\alpha_i},\ldots,\alpha_{k+1})} \right) \\
	&+\sum_{i <j} (-1)^{i+j}P([\alpha_i,\alpha_j]_A, \alpha_1 \ldots, \hat{\alpha_i}, \ldots ,\hat{\alpha_j}, \ldots , \alpha_{k+1}),
\end{aligned}
\end{equation}
for all $\alpha_1,\ldots,\alpha_{k+1} \in \Gamma(A)$, and that $\d_A$ is a degree-1 derivation of $\Gamma(\bigwedge^\bullet A^*)$ such that $\d_A^2=0$.
\begin{definition}[\cite{SX}]
\label{def:qLba}
A quasi-Lie bialgebroid is a triple $(A,\d_{A^*}, \phi)$, where
\begin{itemize} 
\item $A$ is a Lie algebroid
\item $\d_{A^*}$ is a degree-1 derivation of 
$\Gamma(\bigwedge^{\bullet}A)$, both with respect to the wedge product and 
$[\cdot,\cdot]_A$
\item $\phi \in \Gamma(\bigwedge^3 A)$ satisfies $\d_{A^*} \phi =0$ and $\d_{A^*}^2=[\phi, \cdot]_A$.
\end{itemize} 
\end{definition}
Under these assumptions, one can define a morphism $\rho_{A^*}\colon A^* \to T\M$ by 
\begin{equation}\label{eq:Astaranchor}
\rho_{A^*}(X 
)(f)=X  
(\d_{A^*}f),\qquad \forall 
X\in \Gamma(A^{*}), \, 
f \in C^{ \infty}(\M), 
\end{equation}
a bracket in $\Gamma(A^*)$ by 
\begin{equation}\label{eq:Astarbracket}
[X,Y]_{A^*}(\alpha)= \rho_{A^*}(X)(\alpha(Y))-\rho_{A^*}(Y)(\alpha(X)) - (\d_{A^*}\alpha)(X,Y),\quad \forall \alpha \in \Gamma(A),
\end{equation}
and show that $\d_{A^*}$ is explicitly given by a formula which is analog to (\ref{def:d_A}).
If $\phi=0$, then $(A^*,\rho_{A^*},[\cdot,\cdot]_{A^*})$ is also a Lie algebroid, and $(A,A^*)$ turns out to be a Lie bialgebroid (see, e.g., \cite{SX}). 

The first alternative proof of Theorem \ref{thm:gim} hinges on the following two results.
\begin{proposition}[\cite{SX}, Proposition 3.5]\label{pro:PXS}
The quadruple $(M,\pi,N,\phi)$ is a Poisson quasi-Nijenhuis manifold if and only if $((T^\ast M)_\pi,\d_N,\phi)$ is 
a quasi-Lie bialgebroid and $\phi$ is closed.
\end{proposition}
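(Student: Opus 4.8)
The strategy is to unwind Definition~\ref{def:qLba} in the case $A=(T^\ast M)_\pi$ and to match its axioms --- together with the extra requirement that $\phi$ be closed --- against the defining properties of a PqN manifold, one at a time. With this choice of $A$ one has $\Gamma(\bigwedge^\bullet A)=\Omega^\bullet(M)$, the anchor of $A$ is $\pi^\sharp$, the bracket $[\cdot,\cdot]_A$ is $[\cdot,\cdot]_\pi$, and $\phi\in\Gamma(\bigwedge^3 A)=\Omega^3(M)$; moreover the candidate derivation $\d_{A^\ast}$ is the operator $\d_N$ of \eqref{eq:dNd}. Two of the axioms are then automatic or vacuous: $(T^\ast M)_\pi$ is a Lie algebroid precisely when $\pi$ is Poisson, which is assumed on both sides, while $\d_N=i_N\circ\d-\d\circ i_N$ is always a degree-$1$ derivation of $(\Omega^\bullet(M),\wedge)$, being the graded commutator of $\d$ with the degree-$0$ derivation $i_N$ that extends $N$ via \eqref{iNalpha}.

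The two remaining ``soft'' correspondences I would dispatch as follows. By the result of \cite{YKS96} recalled right after \eqref{eq:dNd}, the requirement that $\d_N$ be a derivation of $[\cdot,\cdot]_\pi$ is equivalent to the compatibility \eqref{N-P-compatible} of $\pi$ and $N$. For the condition $\d_{A^\ast}\phi=0$: once $\phi$ is closed, \eqref{eq:dNd} gives $\d_N\phi=i_N\d\phi-\d\,i_N\phi=-\d\,i_N\phi$, so $\d_N\phi=0$ amounts to the closedness of $i_N\phi$; hence ``$\phi$ closed'' together with ``$\d_N\phi=0$'' is exactly the PqN demand that both $\phi$ and $i_N\phi$ be closed.

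The only substantial point is the equivalence of the last axiom $\d_N^2=[\phi,\cdot]_\pi$ with the PqN torsion condition $T_N(X,Y)=\pi^\sharp\big(i_{X\wedge Y}\phi\big)$. For ``$\Rightarrow$'' it suffices to test both operators on functions: applying \eqref{def:d_N} twice (and using \eqref{bracketNdef}) yields $(\d_N^2 f)(X,Y)=\big(T_N(X,Y)\big)(f)$, whereas property (K1) and \eqref{K2pertutte} give $[\phi,f]_\pi=-\,i_{\pi^\sharp\d f}\,\phi$, and a short rewriting using the defining relations of $\pi^\sharp$ and of $i_{X\wedge Y}\phi$ identifies $-\phi(\pi^\sharp\d f,X,Y)$ with $\big(\pi^\sharp(i_{X\wedge Y}\phi)\big)(f)$; equating these for all $f$ forces $T_N(X,Y)=\pi^\sharp(i_{X\wedge Y}\phi)$. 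For the converse I would observe that $\d_N^2=\tfrac12[\d_N,\d_N]$ and, by (K3), $[\phi,\cdot]_\pi$ are both degree-$2$ graded derivations of $(\Omega^\bullet(M),\wedge)$, each of which commutes with $\d$ --- the former because $\d\d_N=-\d_N\d$, the latter because $\d$ is a derivation of $[\cdot,\cdot]_\pi$ and $\phi$ is closed --- so each is determined by its restriction to $C^\infty(M)$; the two therefore coincide as soon as they agree on functions, which is precisely what the torsion condition supplies through the computation just indicated. (Equivalently, this equivalence is Lemma~3.7 of \cite{SX}, and it can be recognized as the self-bracket instance of the Fr\"olicher--Nijenhuis identity $\d_N^2=\d_{T_N}$ combined with the $3$-form analogue $[\phi,\cdot]_\pi=\d_{\pi^\sharp\phi}$ of \eqref{eq:monella}.)

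Putting these correspondences together yields the ``if and only if''. I expect the third paragraph to be the crux: turning the operator identity $\d_N^2=[\phi,\cdot]_\pi$ into the pointwise identity $T_N(X,Y)=\pi^\sharp(i_{X\wedge Y}\phi)$, and back, is the only step demanding a genuine computation --- the double application of \eqref{def:d_N} and the ``determined on functions'' argument --- while everything else is either definitional or a direct appeal to a result already recalled in the excerpt.
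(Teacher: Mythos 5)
Your proposal is correct, and it is essentially the standard argument: the paper itself does not prove Proposition \ref{pro:PXS} but imports it from \cite{SX}, and your reconstruction uses precisely the ingredients the paper assembles elsewhere --- the \cite{YKS96} characterization of compatibility as ``$\d_N$ is a derivation of $[\cdot,\cdot]_\pi$'', the identity $\d_N\phi=-\d\, i_N\phi$ for closed $\phi$, and the equivalence $\d_N^2=[\phi,\cdot]_\pi \Leftrightarrow T_N(X,Y)=\pi^\sharp(i_{X\wedge Y}\phi)$ handled by testing on functions and then noting that both sides are degree-$2$ derivations of $(\Omega^\bullet(M),\wedge)$ commuting with $\d$, hence determined by their restriction to $C^\infty(M)$ (this is Lemma 3.7 of \cite{SX}, which the paper also invokes in the proof of Theorem \ref{thm:gim}). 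The closing parenthetical identification with the Fr\"olicher--Nijenhuis picture is only heuristic, but your main argument does not depend on it.
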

\begin{proposition}\label{pro:liequa}
If $(A,\d_{A^*}, \phi)$ is a quasi-Lie bialgebroid, $\Omega\in \bigwedge^{2}A$, and 
\[
{\widehat\d}_{A^*}={\d}_{A^*}+[\Omega,\cdot]_A,\qquad \widehat\phi=\phi+\d_A\Omega+\frac12[\Omega,\Omega]_A,
\]
then $(A,{\widehat\d}_{A^*},\widehat\phi)$ is a quasi-Lie bialgebroid too, called the \emph{twist} of $(A,\d_{A^*}, \phi)$ by $\Omega$.
\end{proposition}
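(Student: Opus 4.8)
The plan is to verify the three defining conditions of a quasi-Lie bialgebroid (Definition \ref{def:qLba}) for the triple $(A,{\widehat\d}_{A^*},\widehat\phi)$, mimicking the computations already performed in the proof of Theorem \ref{thm:gim}, but now at the level of an arbitrary Lie algebroid $A$ rather than $(T^\ast\M)_\pi$. The key preliminary observation is that, just as (\ref{eq:monella}) gives $\d_{\pi^\sharp\Omega^\flat}=[\Omega,\cdot]_\pi$ for a closed $2$-form, here the operator $[\Omega,\cdot]_A$ is automatically a degree-$1$ derivation of $(\Gamma(\bigwedge^\bullet A),\wedge)$ and of $[\cdot,\cdot]_A$ for \emph{any} $\Omega\in\Gamma(\bigwedge^2 A)$ (this is a standard property of the generalized Schouten bracket on a Lie algebroid, cf. Remark \ref{rem:remark1}). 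Hence ${\widehat\d}_{A^*}=\d_{A^*}+[\Omega,\cdot]_A$, being a sum of two such derivations, is again a degree-$1$ derivation of both the wedge product and $[\cdot,\cdot]_A$; this disposes of the second bullet of Definition \ref{def:qLba}.

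First I would record the operator identity
\[
{\widehat\d}_{A^*}^{\,2}=[\widehat\phi,\cdot]_A ,
\]
which is the exact analogue of (\ref{d_N^2}): expanding ${\widehat\d}_{A^*}^{\,2}\alpha=(\d_{A^*}+[\Omega,\cdot]_A)(\d_{A^*}\alpha+[\Omega,\alpha]_A)$, the cross terms $[\Omega,\d_{A^*}\alpha]_A$ cancel, one uses $\d_{A^*}[\Omega,\alpha]_A=[\d_{A^*}\Omega,\alpha]_A-[\Omega,\d_{A^*}\alpha]_A$ (derivation property of $\d_{A^*}$ with respect to $[\cdot,\cdot]_A$), the hypothesis $\d_{A^*}^2=[\phi,\cdot]_A$, and the graded Jacobi identity to rewrite $[\Omega,[\Omega,\alpha]_A]_A=\tfrac12[[\Omega,\Omega]_A,\alpha]_A$. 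Here one must be slightly careful about one point that does not arise in Theorem \ref{thm:gim}: the term $\d_A\Omega$ in $\widehat\phi$ involves the Lie-algebroid differential $\d_A$, not $\d_{A^*}$, so one needs to know that $[\d_A\Omega,\cdot]_A$ contributes correctly — but in fact $\d_A\Omega\in\Gamma(\bigwedge^3 A^*)$ while $[\Omega,\Omega]_A\in\Gamma(\bigwedge^3 A)$, so I should double-check the bookkeeping of which bracket/differential acts on which space; most likely the statement intends $\d_{A^*}$ throughout (as in (\ref{phi-hat})), and the identity ${\widehat\d}_{A^*}^{\,2}=[\widehat\phi,\cdot]_A$ then follows verbatim from (\ref{d_N^2}) with $\d_N\rightsquigarrow\d_{A^*}$, $[\cdot,\cdot]_\pi\rightsquigarrow[\cdot,\cdot]_A$.

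Next I would verify ${\widehat\d}_{A^*}\widehat\phi=0$, repeating the chain of equalities in (\ref{eq:1d}): apply $\d_{A^*}+[\Omega,\cdot]_A$ to $\phi+\d_{A^*}\Omega+\tfrac12[\Omega,\Omega]_A$, use $\d_{A^*}\phi=0$, $\d_{A^*}^2\Omega=[\phi,\Omega]_A$, the derivation property of $\d_{A^*}$ on $[\cdot,\cdot]_A$ to handle $\d_{A^*}[\Omega,\Omega]_A=2[\d_{A^*}\Omega,\Omega]_A$, the antisymmetry (K1) (in its Lie-algebroid incarnation) to cancel $[\phi,\Omega]_A+[\Omega,\phi]_A$ and $[\d_{A^*}\Omega,\Omega]_A+[\Omega,\d_{A^*}\Omega]_A$, and finally the graded Jacobi identity to kill $[\Omega,[\Omega,\Omega]_A]_A$. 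Together with the first two bullets this shows $(A,{\widehat\d}_{A^*},\widehat\phi)$ is a quasi-Lie bialgebroid, completing the proof.

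\textbf{Main obstacle.} I expect no deep difficulty — the content is essentially the algebraic skeleton of the proof of Theorem \ref{thm:gim}, stripped of the geometry of $(T^\ast\M)_\pi$ — but the genuinely delicate point is \emph{signs and degree bookkeeping}: the generalized Schouten bracket on $\Gamma(\bigwedge^\bullet A)$ is graded-symmetric in the shifted degree (K1), $\d_{A^*}$ has degree $+1$ on $\Gamma(\bigwedge^\bullet A)$, and $\Omega$ has (shifted) degree $1$, so every Leibniz and Jacobi application carries sign factors $(-1)^{\cdots}$ that must line up exactly as in (\ref{eq:1d}) and (\ref{d_N^2}). Making sure these coincide with the conventions fixed in Remark \ref{rem:remark1} and in Definition \ref{def:qLba} — and, as noted, clarifying whether $\d_A\Omega$ or $\d_{A^*}\Omega$ is meant in the formula for $\widehat\phi$ — is where the care is needed; once that is settled, the computation is a transcription of the one already given for Theorem \ref{thm:gim}.
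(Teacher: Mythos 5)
Your proof is correct and is essentially the paper's own argument: the paper merely cites \cite{ILX} and observes that ${\widehat\d}_{A^*}\widehat\phi=0$ and ${\widehat\d}_{A^*}^{\,2}=[\widehat\phi,\cdot]_A$ follow from direct computations completely analogous to (\ref{eq:1d}) and (\ref{d_N^2}), which is exactly what you spell out (together with the easy observation that $[\Omega,\cdot]_A$ is a degree-one derivation of both $\wedge$ and $[\cdot,\cdot]_A$). Your resolution of the notational point is also the right one: since $\d_A$ as defined in (\ref{def:d_A}) acts on $\Gamma(\bigwedge^\bullet A^*)$ while $\Omega\in\Gamma(\bigwedge^2 A)$, the term in $\widehat\phi$ must be read as $\d_{A^*}\Omega$, consistently with (\ref{phi-hat}) and with the way the proposition is applied to $((T^*\M)_\pi,\d_N,\phi)$, where the twisted trivector is written $\phi+\d_N\Omega+\frac12[\Omega,\Omega]_\pi$.
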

\noindent This statement can be found, for example, at the beginning of Section 4.4 of \cite{ILX}. Its proof consists in showing that the new defined triple satisfies what is needed to form a quasi-Lie bialgebroid, i.e., ${\widehat\d}_{A^*}\widehat\phi=0$ and ${\widehat\d}_{A^*}^2=[\widehat\phi,\cdot]_{A}$, which can be both checked by a direct computation completely analogous to (\ref{eq:1d}) and, respectively, (\ref{d_N^2}).
\begin{remark}
Note that if $\phi=0$ Proposition \ref{pro:PXS} reduces to the correspondence between Lie bialgebroids and PN manifolds described in \cite{YKS96}.
\end{remark}

Going back to the proof Theorem \ref{thm:gim}, let $((T^*\M)_\pi,\d_N,\phi)$ be the quasi-Lie bialgebroid associated to the PqN manifold $(\M,\pi,N,\phi)$, and let $\Omega$ be
any 2-form on $\M$. Then we obtain the twist $((T^*\M)_\pi,\d_N+[\Omega,\cdot]_\pi,\widehat\phi)$, where
$$
\widehat\phi=\phi+\d_N\Omega+\frac12[\Omega,\Omega]_\pi.
$$
If $\d\Omega=0$, we know from (\ref{eq:monella}) that
$[\Omega,\cdot]_\pi=\d_{\pi^\sharp\,\Omega^{\flat}}$. Therefore
$$
\d_N+[\Omega,\cdot]_\pi=\d_{N+\pi^\sharp\,\Omega^{\flat}}=\d_{\widehat N},\qquad\mbox{where $\widehat N=N+\pi^\sharp\,\Omega^{\flat}$}.
$$
The last step is to realize that the quasi-Lie bialgebroid $\left((T^*\M)_\pi,\d_{\widehat N},\widehat\phi\right)$ comes from a PqN manifold, i.e., 
that $\widehat\phi$ is closed. 
This follows, as already observed just above Formula (\ref{eq:1d}), from the fact that 
both $\phi$ and $\Omega$ are closed.

\begin{remark}
Proposition \ref{pro:liequa} and a computation very similar 
to the one in Remark \ref{rem:close1} show that the (additive) group $\mathfrak G_{\Omega^2}$ of all 2-forms on $\M$ acts on the set of all quasi-Lie bialgebroids of the form $((T^\ast\M)_\pi,\d_N,\phi)$ by \emph{twisting} them, i.e.,
\begin{equation}
\Omega.((T^\ast\M)_\pi,\d_N,\phi)=((T^\ast\M)_\pi,\d_N+[\Omega,\cdot]_\pi,\phi+\d_N\Omega+\frac{1}{2}[\Omega,\Omega]_\pi).\label{eq:act2}
\end{equation}
The latter restricts to an action of $\mathfrak G_{\Omega^2_c}$, see Remark \ref{rem:close1}, on the set of all quasi-Lie bialgebroids of the form 
$((T^\ast\M)_\pi,\d_N,\phi)$ with $\d\phi=0$. Finally, the bijection between the set of PqN structures $(\M,\pi,N,\phi)$ and the quasi-Lie bialgebroids of the form $((T^\ast\M)_\pi,\d_N,\phi)$ invoked in Proposition \ref{pro:PXS} intertwines (\ref{eq:act2}) with (\ref{eq:act1}).
\end{remark}

\section{Courant algebroids and Dirac structures}
\label{sec:Courant-Dirac}

In this section we give a second alternative proof of Theorem \ref{thm:gim} in the framework of Dirac structures. 
To this end, we start by recalling 
how quasi-Lie bialgebroids are related to Courant algebroids, see \cite{Roytenberg2002,SX} --- 
more precisely, how quasi-Lie bialgebroids correspond to the pairs formed by a Dirac structure and a complementary Lagrangian 
subbundle in a given Courant algebroid, see the proof of part (ii) of Theorem 2.6 in \cite{SX}.
\noindent For the reader convenience, we recall the definition of a Courant algebroid in Appendix A.

Given a quasi-Lie bialgebroid $(A,\d_{A^*}, \phi)$, and using the notations in and after Definition \ref{def:qLba}, 
we consider $E= A^* \oplus A$ together with the non degenerate symmetric pairing 
$$
 \langle X_{1}+
 \alpha_1,X_{2}+
 \alpha_{2} \rangle_E=\dfrac{1}{2}\left( X_{1}\left( \alpha_{2}\right) +X_{2}\left( \alpha_{1}\right) \right),
$$
the bundle map $\rho \colon E \to TM$ given by
$$
\rho(X+ 
\alpha)= \rho_{A^*}(X) + \rho_{A}(\alpha),
$$
and the bracket in $\Gamma(E)$ defined by
\begin{equation}
\begin{aligned}
\label{Courant-bracket}
    \llbracket \alpha,\beta \rrbracket&= [\alpha,\beta]_{A}\\
    \noalign{\medskip}
    \llbracket X,Y \rrbracket &=   \left[ X,Y\right] _{A^{\ast }}+ 
    i_{X\wedge Y}\phi\\
    \llbracket \alpha,Y \rrbracket&= 
    \left( i_{\alpha} (\d_{A}Y)  + \frac{1}{2}\d_{A}(Y(\alpha)) \right) - 
    \left( i_{Y} (\d_{A^*}\alpha) + \frac{1}{2}\d_{A^*}(Y(\alpha)) \right),
\end{aligned}
\end{equation}
for all $X,Y\in \Gamma(A^*)$ and $\alpha,\beta\in \Gamma(A)$, where the bracket $[\cdot,\cdot]_{A^\ast}$ was defined in (\ref{eq:Astarbracket}) while $\d_A$ is the differential defined on  $\Gamma(\bigwedge^\bullet A^\ast)$ by the algebroid $A$.  Then 
$(E,\langle\cdot,\cdot\rangle_E, \llbracket \cdot,\cdot\rrbracket,\rho)$ is a Courant algebroid, $A$ is a Dirac structure (i.e., it is maximal isotropic and its space of sections is closed under $\llbracket \cdot,\cdot\rrbracket$), 
and $A^*$ is a Lagrangian (i.e., maximal and isotropic) subbundle of $E$. Note that, if $\phi=0$, i.e., in the case of a Lie bialgebroid, $A^*$ is a Dirac structure too.

On the other hand, let $(E,\langle \cdot , \cdot \rangle_E ,\llbracket \cdot , \cdot  \rrbracket, \rho)$ be a Courant algebroid and let $A\subset E$ be a Dirac structure (it follows that $A$ has an induced Lie algebroid structure). Suppose that there exists a Lagrangian subbundle $L$ which is transversal to $A$. 
Then we can identify $A^*$ with  
$L$ through
    \begin{equation}
    \label{eq:identification}
    \begin{aligned}
        A^* &\to L\\
        X &\mapsto 
        \tilde X
    \end{aligned}
    \end{equation}
where $
X({\alpha}) =  2  \langle\tilde X, \alpha \rangle_E$ for all $\alpha\in A$. Since we have the identification 
$\Gamma(\bigwedge^\bullet A)\simeq \Gamma(\bigwedge^\bullet L^*)$, we can define 
$\d_L:\Gamma(\bigwedge^\bullet A)\to \Gamma(\bigwedge^\bullet A)$ as in (\ref{def:d_A}), that is,
\begin{equation}
\label{eq:dL}
\begin{aligned}
	(\d_L 
	\psi)(X_1,\ldots,X_{k+1})&= \sum_{i=1}^{k+1}(-1)^{i+1}\rho({\tilde X}_i) \left(\tilde
	\psi({\tilde X}_1,\ldots,\hat{{\tilde X}}_i,\ldots,{\tilde X}_{k+1}) \right) \\
	&+\sum_{i <j} (-1)^{i+j}
	\tilde\psi([{\tilde X}_i,{\tilde X}_j]_L, {\tilde X}_1 \ldots, \hat{{\tilde X}}_i, \ldots ,\hat{{\tilde X}}_j, \ldots , {\tilde X}_{k+1}),
\end{aligned}
\end{equation}
for all $X_1,\ldots,X_{k+1} \in 
\Gamma(A^*)$, where 
$\psi\in\Gamma(\bigwedge^k A)$ corresponds to $\tilde\psi\in\Gamma(\bigwedge^k L^*)$, 
and $[\cdot,\cdot]_L$ is the $L$-component of $\llbracket \cdot,\cdot\rrbracket$ with respect to the splitting $E= L \oplus A$.
Notice that $\d_L$ is a derivation, but $\d_L^2\ne 0$ since $L$ in general is not a Dirac structure.
Finally, let $\varphi \in \Gamma(\bigwedge^3 A)$ be defined 
as
\begin{equation}
\label{eq:phi-Courant}
\varphi(X_1,X_2,X_3)=2 \langle \llbracket {\tilde X}_1, {\tilde X}_2 \rrbracket , {\tilde X}_3 \rangle_E
\qquad \text{ for all } X_1,X_2,X_3 \in 
\Gamma(A^*). 
\end{equation}
Then $(A,\d_L,\varphi)$ turns out to be a quasi-Lie bialgebroid. 

\begin{example}
Consider the quasi-Lie bialgebroid $(T^\ast\M,\d,\phi)$, where $A=T^\ast\M$ with its trivial Lie algebroid structure, i.e., with zero bracket and zero anchor, $\d$ is the Cartan differential and $\phi$ is any closed 3-form on $\M$. Applying (\ref{Courant-bracket}) to the this setting, 
one finds
\[
\begin{aligned}
    \llbracket \alpha,\beta \rrbracket&= 0\\
    \noalign{\medskip}
    \llbracket X,Y \rrbracket &=   \left[ X,Y\right] + 
    i_{X\wedge Y}\phi\\
    \llbracket \alpha,Y \rrbracket&= - 
    \left( i_{Y} (\d\alpha) + \frac{1}{2}\d(Y(\alpha)) \right),
\end{aligned}
\]
for all $X,Y\in\Gamma(T\M)$ and $\alpha,\beta\in\Gamma(T^\ast\M)$, since, in that case, $\d_{A}=0$, $[\cdot,\cdot]_A=0$, $\d_{A^\ast}=\d$ 
and $[\cdot,\cdot]_{A^\ast}=[\cdot,\cdot]$ 
is the usual Lie bracket defined on $\Gamma(T\M)$. Computing the bracket so obtained on a pair of general sections of $E$, one has
\begin{eqnarray}\label{eq:twisted}
\llbracket X+\alpha,Y+\beta\rrbracket
=[X,Y]+i_{X} (\d\beta)-i_{Y} (\d\alpha) + \frac{1}{2}(\d(X(\beta)-\d(Y(\alpha))+i_{X\wedge Y}\phi,
\end{eqnarray}
i.e., $E$ is the Courant algebroid $\mathbb T\M^\phi$, obtained twisting the standard one by the closed 3-form $\phi$, see Example \ref{ex:standardCou}. Note that $T\M$ is a Lagrangian subbundle of $E$ transversal to the Dirac structure $T^\ast\M\subset E$. In particular $(T\M,T^\ast\M)$ is the pair corresponding to $(T^\ast\M,\d,\phi)$. Note now that if $\Omega$ is any closed 2-form in $M$, its graph 
\begin{equation}\label{eq:graphomega}
{\rm Gr}(\Omega)=\{X + \Omega^\flat X\in T\M\oplus T^\ast\M\mid X\in\Gamma(T\M)\}
\end{equation}
is a Lagrangian subbundle of $\mathbb T\M^\phi$, transversal to $T^\ast\M$. A simple computation, 
using (\ref{eq:dL}) and (\ref{eq:phi-Courant}), shows that the quasi-Lie bialgebroid corresponding to $({\rm Gr}(\Omega),T^\ast\M)$ is 
$(T^\ast\M,\d,\phi)$. 
\end{example}

\begin{remark}\label{rem:actLag}
It is worth recalling that every closed 2-form $\Omega$ defines an automorphism of $\mathbb T\mathcal M^\phi$ via the 
formula
\begin{equation}
\Omega.(X+\alpha)=X+i_X\Omega+\alpha,\;\;\forall X+\alpha\in\Gamma(\mathbb T\mathcal M^\phi),\label{eq:gauge}
\end{equation}
see for example \cite{Gualtieri}, just above Proposition 2.2, the proof of Lemma 3.1 in \cite{Bressler} and, in a more general setting, 
Remark 4.2 in \cite{Roytenberg2002}. In particular, the image of a Dirac structure under (\ref{eq:gauge}) is still a Dirac structure. More in general, if $\Omega$ is any 2-form on $\mathcal M$, (\ref{eq:gauge}) sends a Lagrangian subbundle to a Lagrangian subbundle and a pair of transversal Lagrangian subbundles to a pair of transversal Lagrangian subbundles. Note that, in the previous example, the pair $({\rm Gr}(\Omega),T^\ast\mathcal M)$ is obtained applying (\ref{eq:gauge}) to the pair of transversal Lagrangian subbundles $(T\mathcal M,T^\ast\mathcal M)$. On the other hand, since $\d\Omega=0$ and the Poisson structure $\pi$ is trivial, i.e., identically zero, (\ref{eq:act2}) yields  
\begin{equation}
\Omega.(T^\ast\mathcal M,\d,\phi)=(T^\ast\mathcal M,\d,\phi).\label{eq:omeorbit} 
\end{equation}
As opposed to what is stated in part (ii) of Theorem 2.6 in \cite{SX}, this example seems to suggest that the correspondence between quasi-Lie bialgebroids and pairs of a Lagrangian subbundle and of a transversal Dirac structure of a given Courant algebroid, described in the first part of this section is, in general, not one-to-one.
On the other hand, the above example and its generalization, contained in Theorem \ref{thm:Courant}, propound the existence of a one-to-one correspondence between the 
$\Omega^2_c(\mathcal M)$-orbits of quasi-Lie bialgebroids of the type $((T^\ast\mathcal M)_\pi,\d_N,\phi)$ and the $\Omega_c^2(\mathcal M)$-orbits of pairs of a Lagrangian subbundle and of a transversal Dirac structure of the type $({\rm Gr}(\Omega),T^\ast\mathcal M)$ in a Courant algebroid of the type $E=(T\mathcal M)_N\oplus (T^\ast\mathcal M)_\pi$, where $\Omega_c^2(\mathcal M)$ is the (additive) group of the closed 2-forms on $\mathcal M$ acting, on these sets, via (\ref{eq:act2}) and, respectively, (\ref{eq:gauge}).
\end{remark}

We are now ready to present the third proof of Theorem \ref{thm:gim}. We start from a PqN manifold $(\M,\pi,N,\phi)$ to construct the quasi-Lie bialgebroid $((T^*\M)_\pi,\d_N,\phi)$ 
as recalled in Subsection \ref{subsec:qLba}, and therefore the Courant algebroid 
$E= (T\M)_N \oplus (T^*\M)_\pi$ with its Lagrangian subbundle $A^*=T\M$ and its Dirac structure $A=T^*\M$. 
Given a closed 2-form $\Omega$, we apply Formula (\ref{eq:gauge}) to each member of the pair $(T\mathcal M,T^\ast\mathcal M)$, see Remark \ref{rem:actLag}, to get $(L={\rm Gr}(\Omega),T^\ast\mathcal M)$, where $L$ is a \emph{non-integrable} Lagrangian subbundle of $E$. Note that the action of $\Omega$ on the original pair leaves $T^\ast\mathcal M$ fixed. Applying the construction presented in the first part of this section to the new pair $(L,T^\ast\mathcal M)$, we obtain the quasi-Lie bialgebroid $((T^*\M)_\pi,\d_L,\varphi)$ which, thanks to Theorem \ref{thm:Courant}, we are able to identify with $((T^*\M)_\pi,\d_{\widehat N},\widehat\phi)$. Finally, applying Proposition \ref{pro:PXS} to this quasi-Lie bialgebroid we conclude that the quadruplet $(\mathcal M,\pi,\widehat{N},\widehat\phi)$ is a Poisson quasi-Nijenhuis manifold, yielding our (third) proof of Theorem \ref{thm:gim}.  
In this way we are left with proving the following
\begin{theorem}
\label{thm:Courant}
If the 2-form $\Omega$ is closed, 
the quasi-Lie bialgebroid $((T^*\M)_\pi,\d_L,\varphi)$ coincides with
$((T^*\M)_\pi,\d_{\widehat N},\widehat\phi)$, where $\widehat N=N+\pi^\sharp\,\Omega^{\flat}$ and $\widehat\phi$ is given by (\ref{phi-hat}).
\end{theorem}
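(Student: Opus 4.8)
The plan is to verify the two equalities $\d_L=\d_{\widehat N}$ and $\varphi=\widehat\phi$ directly from the defining formulas~(\ref{eq:dL}) and~(\ref{eq:phi-Courant}), the Courant bracket~(\ref{Courant-bracket}), and the identification~(\ref{eq:identification}). First I would pin down that identification: since $L={\rm Gr}(\Omega)$ is the image of the first summand $T\M\subset E$ under the gauge transformation~(\ref{eq:gauge}), the section of $L$ associated to $X\in\Gamma(T\M)$ is $\tilde X=X+\Omega^\flat X$, and this satisfies the defining condition of~(\ref{eq:identification}) because $\langle X+\Omega^\flat X,\alpha\rangle_E=\tfrac12\langle\alpha,X\rangle$ for every $\alpha\in\Gamma(T^\ast\M)=\Gamma(A)$ (both $L$ and $A$ being isotropic). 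Consequently $\rho(\tilde X)=\rho(X+\Omega^\flat X)=NX+\pi^\sharp\Omega^\flat X=\widehat NX$, so the anchor of $E$ restricted to $L$ already agrees with the anchor $\widehat N$ of the Lie algebroid underlying $((T^\ast\M)_\pi,\d_{\widehat N},\widehat\phi)$.

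Both $\d_L$ and $\d_{\widehat N}$ are degree-$1$ derivations of $(\Gamma(\bigwedge^\bullet T^\ast\M),\wedge)$, so it is enough to compare them on functions and $1$-forms, while $\varphi$ is determined by~(\ref{eq:phi-Courant}); all three comparisons follow from the single identity
\[
\llbracket\tilde X,\tilde Y\rrbracket=[X,Y]_{\widehat N}+\bigl(\Omega^\flat[X,Y]_{\widehat N}+i_{X\wedge Y}\widehat\phi\bigr)
\qquad\text{for all }X,Y\in\Gamma(T\M),
\]
written in the splitting $E=T\M\oplus T^\ast\M$; equivalently, the $L$-component of $\llbracket\tilde X,\tilde Y\rrbracket$ (with respect to $E=L\oplus A$) is $\widetilde{[X,Y]_{\widehat N}}$ and its $A$-component is $i_{X\wedge Y}\widehat\phi$. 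Granting this: on functions $(\d_Lf)(X)=\rho(\tilde X)(f)=\langle\d f,\widehat NX\rangle=(\d_{\widehat N}f)(X)$; on $1$-forms, inserting $\rho(\tilde X)=\widehat NX$ and the above into~(\ref{eq:dL}) reproduces verbatim formula~(\ref{def:d_N}) for $\d_{\widehat N}$; and, since $\langle L,L\rangle_E=0$, formula~(\ref{eq:phi-Courant}) gives $\varphi(X,Y,Z)=2\langle i_{X\wedge Y}\widehat\phi,\tilde Z\rangle_E=\langle i_{X\wedge Y}\widehat\phi,Z\rangle=\widehat\phi(X,Y,Z)$.

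To prove the bracket identity I would expand $\llbracket X+\Omega^\flat X,Y+\Omega^\flat Y\rrbracket$ by $\RR$-bilinearity into $\llbracket X,Y\rrbracket$, $\llbracket X,\Omega^\flat Y\rrbracket$, $\llbracket\Omega^\flat X,Y\rrbracket$, $\llbracket\Omega^\flat X,\Omega^\flat Y\rrbracket$ and apply the three cases of~(\ref{Courant-bracket}) (with $A^\ast=(T\M)_N$, $A=(T^\ast\M)_\pi$, $\d_{A^\ast}=\d_N$, and $\d_A$ the differential of the cotangent Lie algebroid $(T^\ast\M)_\pi$ acting on $\Gamma(\bigwedge^\bullet T\M)$). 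The $T\M$-component collects to $[X,Y]_N$ together with the vector-field parts of the two mixed brackets; using the identity $i_\alpha(\d_AY)=[\pi^\sharp\alpha,Y]+\pi^\sharp({\mathcal L}_Y\alpha)$ and, crucially, the consequence of $\d\Omega=0$ that ${\mathcal L}_Y(i_X\Omega)-{\mathcal L}_X(i_Y\Omega)-\d(\Omega(X,Y))=-i_{[X,Y]}\Omega$, this sum becomes $[X,Y]_N+[X,Y]_{\pi^\sharp\Omega^\flat}=[X,Y]_{\widehat N}$, the last step because $[\,\cdot\,,\cdot\,]_K$ is linear in the $(1,1)$-tensor $K$ and $\widehat N=N+\pi^\sharp\Omega^\flat$. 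The $T^\ast\M$-component collects to $i_{X\wedge Y}\phi+[\Omega^\flat X,\Omega^\flat Y]_\pi$ together with the $\d_N$-contributions $i_X\d_N(\Omega^\flat Y)-i_Y\d_N(\Omega^\flat X)-\d_N(\Omega(X,Y))$, which by~(\ref{def:d_N}) equal $i_{X\wedge Y}\d_N\Omega+\Omega^\flat[X,Y]_N$; invoking~(\ref{eq:dorfman}) to recognize that $[\Omega^\flat X,\Omega^\flat Y]_\pi-\Omega^\flat[X,Y]_{\pi^\sharp\Omega^\flat}=\tfrac12\,i_{X\wedge Y}[\Omega,\Omega]_\pi$, one then obtains $i_{X\wedge Y}\phi+i_{X\wedge Y}\d_N\Omega+\tfrac12\,i_{X\wedge Y}[\Omega,\Omega]_\pi+\Omega^\flat[X,Y]_{\widehat N}=i_{X\wedge Y}\widehat\phi+\Omega^\flat[X,Y]_{\widehat N}$ by~(\ref{phi-hat}), as required.

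I expect the main obstacle to be the $T^\ast\M$-component: keeping the signs straight in the mixed ($\alpha$--$Y$) case of~(\ref{Courant-bracket}), which couples $\d_A$ and $\d_{A^\ast}$; correctly separating each of the four brackets into its $T\M$- and $T^\ast\M$-parts; and matching the assembled $\d_N$-terms against the cyclic sum in~(\ref{eq:dorfman}), i.e.\ checking the identity $[\Omega^\flat X,\Omega^\flat Y]_\pi=\Omega^\flat[X,Y]_{\pi^\sharp\Omega^\flat}+\tfrac12\,i_{X\wedge Y}[\Omega,\Omega]_\pi$ (a Cartan-calculus manipulation on top of~(\ref{eq:dorfman}), consistent with~(\ref{eq:monella})). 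Everything else --- the anchor match, the function case, and the $T\M$-component --- is routine bookkeeping. As a sanity check and an alternative packaging, one can note that the above bracket identity, extended by bilinearity, says precisely that the gauge transformation $e^\Omega$ of~(\ref{eq:gauge}) is a Courant-algebroid isomorphism from the Courant algebroid of the twisted quasi-Lie bialgebroid $((T^\ast\M)_\pi,\d_{\widehat N},\widehat\phi)$ (which is a quasi-Lie bialgebroid by Proposition~\ref{pro:liequa} combined with~(\ref{eq:monella})) onto $E$, carrying the canonical Dirac/Lagrangian pair $(T^\ast\M,T\M)$ to $(T^\ast\M,L)$; this realizes Theorem~\ref{thm:Courant} as the Courant-theoretic shadow of the twisting construction.
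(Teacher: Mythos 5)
Your proposal is correct and follows essentially the same route as the paper: the same identification $\tilde X=X+\Omega^\flat X$, the same central bracket computation $\llbracket\tilde X,\tilde Y\rrbracket=[X,Y]_{\widehat N}+\Omega^\flat[X,Y]_{\widehat N}+i_{X\wedge Y}\widehat\phi$ split into its $T\M$- and $T^*\M$-components (the paper's Lemma \ref{lem:Courant-bracket}), and the same two auxiliary identities, namely the relation of Lemma \ref{lem:dN-Omega} and the Koszul-type identity of Lemma \ref{lem:Koszul}, the latter of which you derive from (\ref{eq:dorfman}) exactly as in Remark \ref{rem:Koszul-bis} rather than by the direct Cartan-calculus argument of Appendix B. The only genuinely new touch is your closing reformulation of the bracket identity as the statement that the gauge transformation (\ref{eq:gauge}) is a Courant-algebroid isomorphism carrying $(T^*\M,T\M)$ to $(T^*\M,L)$, which is a pleasant way to package the result but not a different proof.
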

The first step to prove this theorem is the computation of the bracket (\ref{Courant-bracket}) between two sections of $L$. To this aim, we need two preliminary results.

\begin{lemma}
\label{lem:Koszul}
For any closed 2-form $\Omega$, the following relation holds:
\begin{equation}
\label{eq:Koszul}
[\Omega^\flat X,\Omega^\flat Y]_\pi=\Omega^\flat [X,Y]_{\pi^\sharp\Omega^\flat}+\frac12 i_{X\wedge Y}[\Omega,\Omega]_\pi.
\end{equation}
\end{lemma}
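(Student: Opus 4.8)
The plan is to expand both sides of \eqref{eq:Koszul} by pairing against an arbitrary vector field $Z$ and checking equality as $1$-forms. First I would recall the identity \eqref{eq:monella}, namely $\d_{\pi^\sharp\Omega^\flat}=[\Omega,\cdot]_\pi$, valid because $\d\Omega=0$; this converts Koszul brackets of the form $[\Omega,\cdot]_\pi$ acting on forms into the derivation $\d_{\pi^\sharp\Omega^\flat}$, so that the combinatorics of the $\d_N$-type formula \eqref{def:d_N}--\eqref{bracketNdef} become available with $N$ replaced by $\pi^\sharp\Omega^\flat$. In particular, $[X,Y]_{\pi^\sharp\Omega^\flat}$ denotes the deformed bracket \eqref{bracketNdef} with $N=\pi^\sharp\Omega^\flat$, so $\Omega^\flat[X,Y]_{\pi^\sharp\Omega^\flat}=\Omega^\flat\big([\pi^\sharp\Omega^\flat X,Y]+[X,\pi^\sharp\Omega^\flat Y]-\pi^\sharp\Omega^\flat[X,Y]\big)$.

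The key computational step is to express $[\Omega^\flat X,\Omega^\flat Y]_\pi$ using the definition \eqref{eq:liealgpi} of the Lie bracket on $1$-forms induced by $\pi$: since $\Omega^\flat X$ and $\Omega^\flat Y$ are $1$-forms,
\[
[\Omega^\flat X,\Omega^\flat Y]_\pi
=L_{\pi^\sharp\Omega^\flat X}(\Omega^\flat Y)-L_{\pi^\sharp\Omega^\flat Y}(\Omega^\flat X)-\d\langle\Omega^\flat Y,\pi^\sharp\Omega^\flat X\rangle .
\]
Then I would use the Cartan-type identities $L_{\pi^\sharp\Omega^\flat X}(\Omega^\flat Y)=i_{\pi^\sharp\Omega^\flat X}\d(\Omega^\flat Y)+\d\, i_{\pi^\sharp\Omega^\flat X}(\Omega^\flat Y)$ and, crucially, rewrite $\d(\Omega^\flat Y)=\d(i_Y\Omega)=L_Y\Omega - i_Y\d\Omega = L_Y\Omega$ because $\Omega$ is closed. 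Pairing everything against $Z$ and carefully tracking which terms are $L_Y\Omega$-type versus $\d$-exact, the right-hand side reorganizes: the $\Omega^\flat$ applied to the $[X,Y]_{\pi^\sharp\Omega^\flat}$ combination produces exactly the "Lie-derivative along $\pi^\sharp\Omega^\flat$" pieces plus $\Omega$ evaluated on the deformed brackets, while the remainder collects into $\tfrac12 i_{X\wedge Y}[\Omega,\Omega]_\pi$. Here I would invoke the explicit formula \eqref{eq:dorfman} for $[\Omega,\Omega]_\pi(X,Y,Z)$ — which is exactly the cyclic sum of $\langle[\Omega^\flat X,\Omega^\flat Y]_\pi,Z\rangle - L_{\pi^\sharp\Omega^\flat X}(\Omega(Y,Z))$ — so that the matching becomes a bookkeeping identity among cyclic sums rather than an opaque computation.

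An alternative, cleaner route (which I would actually prefer to present) is to avoid coordinates entirely and argue at the level of the Courant bracket: the graph $\mathrm{Gr}(\Omega)$ inside $\mathbb{T}\mathcal M^\phi$-type algebroids has a known bracket formula, and \eqref{eq:Koszul} is precisely the statement that the $(T^*\mathcal M)_\pi$-component of $\llbracket X+\Omega^\flat X, Y+\Omega^\flat Y\rrbracket$ computed in $E=(T\mathcal M)_N\oplus(T^*\mathcal M)_\pi$ decomposes as claimed. However, since Lemma \ref{lem:Koszul} is stated as an intermediate step toward Theorem \ref{thm:Courant}, using the Courant bracket would be circular; so I would stick with the direct $1$-form computation above.

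The main obstacle I anticipate is sign/normalization management: the bracket $[\cdot,\cdot]_\pi$ on $1$-forms, its extension to the generalized Schouten bracket via (K1)--(K3), the factor $\tfrac12$ in \eqref{eq:dorfman}, and the antisymmetrization conventions in $i_{X\wedge Y}$ all have to line up. The cleanest way to control this is to verify the identity first on exact $1$-forms — i.e., test the $X$, $Y$ in \eqref{eq:Koszul} against $\d f$, $\d g$ using \eqref{K2pertutte} and the fact that $\pi^\sharp\Omega^\flat\d f$ has a transparent action — and then extend by the derivation property (K3), which forces the general case since both sides of \eqref{eq:Koszul} are (graded) derivations in each slot once $X\mapsto\Omega^\flat X$ is taken into account. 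This reduces the whole lemma to a finite check plus a Leibniz-rule bootstrap, which is the organizing principle I would follow.
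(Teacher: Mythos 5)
Your first route --- pairing both sides with an arbitrary vector field $Z$, expanding $[\Omega^\flat X,\Omega^\flat Y]_\pi$ via (\ref{eq:liealgpi}) and Cartan calculus, and matching the result against the explicit cyclic-sum expression for $[\Omega,\Omega]_\pi(X,Y,Z)$ --- is exactly the strategy of the paper's Appendix B (there the cyclic sum is obtained by writing $[\Omega,\Omega]_\pi=\d_{\pi^\sharp\Omega^\flat}\Omega$ and applying (\ref{def:d_N}), rather than by quoting (\ref{eq:dorfman}), but it is the same formula). What your proposal does not supply is the computation itself, which is the entire content of the lemma: the step "the remainder collects into $\tfrac12 i_{X\wedge Y}[\Omega,\Omega]_\pi$" is precisely the hard part. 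The paper makes it tractable by an organizing device you do not mention: it sets $A(X,Y,Z)=\langle[\Omega^\flat X,\Omega^\flat Y]_\pi-\Omega^\flat[X,Y]_{\pi^\sharp\Omega^\flat},Z\rangle$, proves $A(X,Y,Y)=0$ (this is where $\d\Omega=0$ first enters), and then evaluates $2A(X,Y,Z)=A(X,Y,Z)-A(X,Z,Y)$, which collapses to the desired cyclic sum. You correctly identify the Courant-bracket route as circular here.

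The reduction you say you would actually follow is not well posed. The arguments $X,Y$ in (\ref{eq:Koszul}) are vector fields, so "testing them against $\d f,\d g$" and invoking (\ref{K2pertutte}) does not typecheck ($\Omega^\flat$ is applied to vector fields, not to $1$-forms, so $\Omega^\flat\d f$ and $\pi^\sharp\Omega^\flat\d f$ are meaningless), and (K3) is a Leibniz rule for the wedge product of forms, not for the dependence of either side on $X$ and $Y$; neither side is a derivation in those slots. The salvageable version of this idea is to observe that the discrepancy between the two sides is $C^\infty(\M)$-bilinear in $(X,Y)$ --- both sides produce the same anomaly $-(\pi^\sharp\Omega^\flat Y)(f)\,\Omega^\flat X$ under $X\mapsto fX$ --- which reduces the claim to a pointwise identity on a local frame; but even then the Cartan-calculus computation must still be carried out, so nothing is saved. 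To complete the proof, execute the direct computation along the lines of Appendix B (or of Remark \ref{rem:Koszul-bis}) and drop the bootstrap paragraph.
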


A direct proof of this lemma is given in Appendix B. Another proof can be spelled out along the lines of the following 

\begin{remark}\label{rem:Koszul-bis}
For any 2-form $\Omega$, not necessarily closed, a tedious but straightforward computation which uses \eqref{eq:dorfman} shows that
\begin{equation}
\label{eq:Koszul-bis}
[\Omega^\flat X,\Omega^\flat Y]_\pi=\Omega^\flat [X,Y]^\pi_\Omega+\frac12 i_{X\wedge Y}[\Omega,\Omega]_\pi,
\end{equation}
where
$$
[X,Y]_\Omega^\pi={\mathcal L}^\pi_{\Omega^{\flat} X}{Y} -{\mathcal L}^\pi_{\Omega^{\flat} Y}{X} - \d_\pi (\Omega(X,Y))
$$
and
$$
    \mathcal{L}^{\pi}_{\alpha}Y= \mathcal{L}_{\pi^{\sharp}\alpha}Y+ \pi^{\sharp}(i_Y(\d\alpha)),
$$
for all $X,Y\in\Gamma(T\mathcal M)$ and $\alpha\in\Gamma(T^\ast\mathcal M)$. Then one can obtain (\ref{eq:Koszul}) from the fact that $[X,Y]_\Omega^\pi=[X,Y]_{\pi^\sharp\Omega^\flat}$ 
for all vector field $X,Y$ if $\Omega$ is closed.
\end{remark}

\begin{lemma}
\label{lem:dN-Omega}
For any 2-form $\Omega$, one has that
\begin{equation}
\label{dN-Omega}
\d_N(i_{X\wedge Y}\Omega)=i_X \d_N i_Y\Omega-i_Y \d_N i_X\Omega-i_{X\wedge Y}\d_N\Omega-i_{[X,Y]_N}\Omega.
\end{equation}
\end{lemma}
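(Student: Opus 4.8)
The plan is to prove the identity \eqref{dN-Omega} by a direct but organized computation, evaluating both sides on vector fields $X_1,\dots,X_{k}$ (here $\Omega$ is a 2-form, so both sides are 2-forms and we test on a pair $X_1,X_2$, but it is cleaner to keep the argument general enough to see the cancellations). The key is to use the explicit Cartan-type formula \eqref{def:d_N} for $\d_N$ together with the twisted Lie bracket \eqref{bracketNdef}, and to recall that $i_{X\wedge Y}$ on a $p$-form is a degree $-2$ operator. First I would note the general commutation rule for the contraction operators: for an ordinary differential $\d$ one has the Cartan homotopy formula $i_X\d + \d i_X = \mathcal L_X$, and contracting twice, $i_{X\wedge Y}\d\beta = i_X i_Y \d\beta$. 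The analogue for $\d_N$ is exactly what we want to establish, so rather than quoting it we derive it by writing $\d_N = i_N\circ\d - \d\circ i_N$ and commuting the $i_X$, $i_Y$ past $i_N$ and $\d$ one at a time, keeping careful track of the correction terms produced by each commutator $[i_X, i_N]$ and $[i_X,\d]=\mathcal L_X$.

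The main computation proceeds in steps. Step one: establish the ``derivation/Cartan'' identity for a single contraction, namely $i_X\d_N - \d_N i_X = \mathcal L_{NX} - i_{[N\,\cdot\,,X]\text{-type correction}}$; more precisely I expect $i_X\d_N + \d_N i_X = \mathcal L^N_X$ for an appropriate $N$-twisted Lie derivative $\mathcal L^N_X$, and I would pin down $\mathcal L^N_X$ by evaluating on functions and 1-forms. Step two: iterate this to two contractions. Writing $i_{X\wedge Y}\d_N\Omega = i_X i_Y \d_N\Omega$ and repeatedly applying the single-contraction identity, one gets
\[
i_X i_Y \d_N\Omega = i_X\bigl(\mathcal L^N_Y\Omega - \d_N i_Y\Omega\bigr) = i_X\mathcal L^N_Y\Omega - i_X\d_N i_Y\Omega,
\]
and then $i_X\d_N i_Y\Omega = \mathcal L^N_X(i_Y\Omega) - \d_N i_X i_Y\Omega = \mathcal L^N_X i_Y\Omega - \d_N i_{X\wedge Y}\Omega$ (using that $i_Y\Omega$ is a 1-form so the correction term in the single-contraction identity, which involves contracting a 1-form with a vector field twice, simplifies). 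Assembling these, $\d_N i_{X\wedge Y}\Omega$ equals $i_X\d_N i_Y\Omega - i_Y\d_N i_X\Omega - i_{X\wedge Y}\d_N\Omega$ plus a commutator term $i_X\mathcal L^N_Y\Omega - \mathcal L^N_X i_Y\Omega$ (symmetrized). Step three: identify that leftover commutator term with $-i_{[X,Y]_N}\Omega$; this is the $N$-twisted analogue of the classical identity $[\mathcal L_X, i_Y] = i_{[X,Y]}$, and it should follow from the definition \eqref{bracketNdef} of $[X,Y]_N = [NX,Y]+[X,NY]-N[X,Y]$ once $\mathcal L^N_X$ is written out.

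I expect the main obstacle to be bookkeeping: getting the signs right and correctly handling the ``correction'' terms that appear because $\d_N = i_N\circ\d - \d\circ i_N$ does not satisfy a clean Cartan formula on its own (unlike $\d$), and because $i_N$ does not commute with $i_X$ in general (one has $i_X i_N\alpha \ne i_N i_X\alpha$, the difference involving $i_{NX}$). A clean way to sidestep some of this is to evaluate everything on $k$ vector fields $Z_1,\dots,Z_k$ using \eqref{def:d_N} directly: the left side $\d_N(i_{X\wedge Y}\Omega)$ and the four terms on the right each expand into Lie-derivative terms $\mathcal L_{N(\cdot)}(\cdots)$ and bracket terms $\Omega([\cdot,\cdot]_N,\cdots)$, and one checks term-by-term that the Lie-derivative contributions match and the bracket contributions match, the latter using only the bilinearity of $[\cdot,\cdot]_N$ and the symmetry relation $[X,Y]_N = -[Y,X]_N + (\text{terms that vanish against }\Omega\text{'s antisymmetry})$. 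Either route is routine; I would present the operator-identity route since it is shorter, relegating the verification of the single-contraction Cartan formula for $\d_N$ to a line or two, as it is a standard fact about the derivation $\d_N$ associated to a $(1,1)$-tensor (see \eqref{eq:dNd}).
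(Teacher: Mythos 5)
Your proposal is correct in strategy, but it takes a genuinely different route from the paper. The paper's proof is exactly your fallback option: it evaluates every term of \eqref{dN-Omega} on a third vector field $Z$ using the explicit formula \eqref{def:d_N}, expands $\d_N\Omega(X,Y,Z)$, $\langle i_X\d_N i_Y\Omega,Z\rangle$ and $\langle i_Y\d_N i_X\Omega,Z\rangle$ into Lie-derivative and $[\cdot,\cdot]_N$-bracket terms, and matches them directly. Your primary route --- building a twisted Cartan calculus for $\d_N$ --- works and is arguably more illuminating: the anticommutator $i_X\d_N+\d_N i_X$ is a degree-$0$ derivation, hence determined on functions and $1$-forms, where one finds ${\mathcal L}^N_X={\mathcal L}_{NX}-i_{{\mathcal L}_XN}$ (with $i_{{\mathcal L}_XN}$ the algebraic derivation \eqref{iNalpha} built from the $(1,1)$-tensor ${\mathcal L}_XN$); likewise the commutator $[{\mathcal L}^N_Y,i_X]$ is a degree-$(-1)$ antiderivation, and on $1$-forms one computes $[{\mathcal L}^N_Y,i_X]=i_{[NY,X]}+i_{({\mathcal L}_YN)X}=i_{[Y,X]_N}$, which is precisely the twisted analogue of $[{\mathcal L}_X,i_Y]=i_{[X,Y]}$ you need in Step three. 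Assembling these two identities gives \eqref{dN-Omega} in three lines and produces reusable operator identities, at the cost of having to justify the ``determined on generators'' argument. One concrete caution on your bookkeeping: with the paper's convention $\langle i_{X\wedge Y}\phi,Z\rangle=\phi(X,Y,Z)$ one has $i_{X\wedge Y}=i_Y\circ i_X=-i_X\circ i_Y$, so your intermediate identities $i_{X\wedge Y}\d\beta=i_Xi_Y\d\beta$ and $\d_N i_Xi_Y\Omega=\d_N i_{X\wedge Y}\Omega$ each carry a wrong sign as written; these cancel against the sign slip in your final assembly, but they must be fixed consistently for the argument to read correctly.
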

\begin{proof}
Using (\ref{def:d_N}), we obtain
\begin{equation}
\label{dN-Omega2}
\begin{aligned}
\d_N\Omega(X,Y,Z)&={\mathcal L}_{NX}\langle i_Y\Omega,Z\rangle-{\mathcal L}_{NY}\langle i_X\Omega,Z\rangle
+{\mathcal L}_{NZ}\langle i_X\Omega,Y\rangle\\
&\quad-\Omega([X,Y]_N,Z)+\Omega([X,Z]_N,Y)-\Omega([Y,Z]_N,X)
\\&={\mathcal L}_{NX}\langle i_Y\Omega,Z\rangle-{\mathcal L}_{NY}\langle i_X\Omega,Z\rangle
+\langle\d(\Omega(X,Y)),NZ\rangle\\
&\quad-\langle i_{[X,Y]_N}\Omega,Z\rangle-\langle i_Y\Omega,[X,Z]_N\rangle+\langle i_X\Omega,[Y,Z]_N\rangle
\end{aligned}
\end{equation}
and 
$$
\begin{aligned}
\langle i_X\d_N i_Y\Omega,Z\rangle=\d_N(i_Y\Omega)(X,Z)&={\mathcal L}_{NX}\langle i_Y\Omega,Z\rangle-{\mathcal L}_{NZ}\langle i_Y\Omega,X\rangle-\langle i_Y\Omega,[X,Z]_N\rangle\\
\langle i_Y\d_N i_X\Omega,Z\rangle=\d_N(i_X\Omega)(Y,Z)&={\mathcal L}_{NY}\langle i_X\Omega,Z\rangle-{\mathcal L}_{NZ}\langle i_X\Omega,Y\rangle-\langle i_X\Omega,[Y,Z]_N\rangle.
\end{aligned}
$$
Substituting these relations into (\ref{dN-Omega2}), we find (\ref{dN-Omega}) evaluated on an arbitrary vector field $Z$.
\end{proof}

\begin{lemma}
\label{lem:Courant-bracket}
If $\d\Omega=0$, then  
\begin{equation}
\label{Courant-bracket2}
    \llbracket X +
    \Omega^{\flat} X,Y + 
    \Omega^{\flat}Y\rrbracket= 
[X, Y]_{\widehat N}+
   \Omega^{\flat}[X, Y]_{\widehat N}+ i_{X\wedge Y}\left(\phi+\d_{N} \Omega+\frac{1}{2}[\Omega, \Omega]_\pi\right) 
    .
\end{equation}
\end{lemma}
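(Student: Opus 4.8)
The plan is to expand the left-hand side by $\mathbb R$-bilinearity of $\llbracket\cdot,\cdot\rrbracket$ as
\[
\llbracket X+\Omega^\flat X,\,Y+\Omega^\flat Y\rrbracket=\llbracket X,Y\rrbracket+\llbracket X,\Omega^\flat Y\rrbracket+\llbracket \Omega^\flat X,Y\rrbracket+\llbracket \Omega^\flat X,\Omega^\flat Y\rrbracket ,
\]
to evaluate each summand through the three defining formulas in \eqref{Courant-bracket}, specialized to the present situation $A=(T^\ast\M)_\pi$, $A^\ast=(T\M)_N$ (so that $[\cdot,\cdot]_A=[\cdot,\cdot]_\pi$, $\d_A=\d_\pi$ is the Poisson differential, $[\cdot,\cdot]_{A^\ast}=[\cdot,\cdot]_N$ and $\d_{A^\ast}=\d_N$), and finally to sort the resulting expression according to the decomposition $E=(T\M)_N\oplus(T^\ast\M)_\pi$, recognizing its two components.

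The two ``extreme'' summands are easy: $\llbracket X,Y\rrbracket=[X,Y]_N+i_{X\wedge Y}\phi$ comes directly from the second line of \eqref{Courant-bracket}, while $\llbracket \Omega^\flat X,\Omega^\flat Y\rrbracket=[\Omega^\flat X,\Omega^\flat Y]_\pi$ is rewritten, using $\d\Omega=0$ and Lemma \ref{lem:Koszul}, as $\Omega^\flat[X,Y]_{\pi^\sharp\Omega^\flat}+\tfrac12 i_{X\wedge Y}[\Omega,\Omega]_\pi$, a section of $T^\ast\M$. The two mixed summands come from the third line of \eqref{Courant-bracket}: $\llbracket \Omega^\flat X,Y\rrbracket$ fits that formula directly (with $\alpha=\Omega^\flat X$), and $\llbracket X,\Omega^\flat Y\rrbracket$ is obtained from it by skew-symmetry of $\llbracket\cdot,\cdot\rrbracket$. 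In both one simplifies via the pairings $Y(\Omega^\flat X)=\Omega(X,Y)=-X(\Omega^\flat Y)$, noting that in these two terms the $\d_\pi$-contributions are vector fields (hence land in the $(T\M)_N$-summand) while the $\d_N$-contributions are $1$-forms (hence land in the $(T^\ast\M)_\pi$-summand).

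Collecting the $T^\ast\M$-component of the whole sum, one gets $i_{X\wedge Y}\phi$, the Koszul term above, and the contraction terms $i_X\d_N(\Omega^\flat Y)-i_Y\d_N(\Omega^\flat X)-\d_N(\Omega(X,Y))$ from the mixed brackets. Writing $\Omega^\flat X=i_X\Omega$, $\Omega^\flat Y=i_Y\Omega$ and applying Lemma \ref{lem:dN-Omega} in the rearranged form
\[
i_X\d_N i_Y\Omega-i_Y\d_N i_X\Omega-\d_N(\Omega(X,Y))=i_{X\wedge Y}\d_N\Omega+i_{[X,Y]_N}\Omega=i_{X\wedge Y}\d_N\Omega+\Omega^\flat[X,Y]_N ,
\]
this component becomes $\Omega^\flat\bigl([X,Y]_N+[X,Y]_{\pi^\sharp\Omega^\flat}\bigr)+i_{X\wedge Y}\bigl(\phi+\d_N\Omega+\tfrac12[\Omega,\Omega]_\pi\bigr)$. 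Since $\widehat N=N+\pi^\sharp\Omega^\flat$ and \eqref{bracketNdef} is additive in its $(1,1)$-tensor argument, $[X,Y]_N+[X,Y]_{\pi^\sharp\Omega^\flat}=[X,Y]_{\widehat N}$, and the parenthesis is precisely $\widehat\phi$ of \eqref{phi-hat}; so the $T^\ast\M$-component equals $\Omega^\flat[X,Y]_{\widehat N}+i_{X\wedge Y}\widehat\phi$. For the $T\M$-component one is left with $[X,Y]_N$ plus the combination $i_{\Omega^\flat X}\d_\pi Y-i_{\Omega^\flat Y}\d_\pi X+\d_\pi(\Omega(X,Y))$; Cartan's magic formula $\mathcal L^\pi_\alpha=i_\alpha\d_\pi+\d_\pi i_\alpha$ for the Lie algebroid $(T^\ast\M)_\pi$ (which identifies $\mathcal L^\pi_\alpha$ of Remark \ref{rem:Koszul-bis} with that algebroid's Lie derivative) turns this into $\mathcal L^\pi_{\Omega^\flat X}Y-\mathcal L^\pi_{\Omega^\flat Y}X-\d_\pi(\Omega(X,Y))=[X,Y]_\Omega^\pi$, which equals $[X,Y]_{\pi^\sharp\Omega^\flat}$ precisely because $\d\Omega=0$ (Remark \ref{rem:Koszul-bis}); so the $T\M$-component is again $[X,Y]_N+[X,Y]_{\pi^\sharp\Omega^\flat}=[X,Y]_{\widehat N}$. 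Adding the two components yields \eqref{Courant-bracket2}, whose right-hand side is visibly a section $[X,Y]_{\widehat N}+\Omega^\flat[X,Y]_{\widehat N}$ of $\mathrm{Gr}(\Omega)$ plus a term in $T^\ast\M$, as it should be.

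The main difficulty is not conceptual but a matter of accounting: one must keep the sign conventions for $\pi^\sharp$, for $\d_\pi$ on functions and on vector fields, and for the skew-symmetrization of the mixed Courant bracket mutually consistent, and split every intermediate expression cleanly into its $T\M$- and $T^\ast\M$-parts, so that Lemmas \ref{lem:Koszul} and \ref{lem:dN-Omega} can be slotted in term by term. Each individual identity used is routine; the work lies in the bookkeeping.
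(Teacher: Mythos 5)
Your proof is correct and follows essentially the same route as the paper: expand the bracket via (\ref{Courant-bracket}), split the result into its $T\M$- and $T^*\M$-components, and use Lemma \ref{lem:Koszul} together with Lemma \ref{lem:dN-Omega} to identify the $1$-form part with $\Omega^\flat[X,Y]_{\widehat N}+i_{X\wedge Y}\widehat\phi$. The only (harmless) difference is in the $T\M$-component, where the paper simplifies $i_{\Omega^\flat X}(\d_\pi Y)-i_{\Omega^\flat Y}(\d_\pi X)+\d_\pi(\Omega(X,Y))$ by a direct computation using $\d_\pi Y=-{\mathcal L}_Y\pi$, whereas you repackage it via the Lie-algebroid Cartan formula as $[X,Y]^\pi_\Omega$ and then invoke the identity $[X,Y]^\pi_\Omega=[X,Y]_{\pi^\sharp\Omega^\flat}$ for closed $\Omega$ stated (without proof) in Remark \ref{rem:Koszul-bis}; both paths give $[X,Y]_{\widehat N}$.
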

\begin{proof}
First we compute the vector field component of the left-hand side of (\ref{Courant-bracket2}). By definition (\ref{Courant-bracket}), it is given by
$$
\begin{aligned}
&[X,Y]_N+i_{\Omega^\flat X}(\d_\pi Y)-i_{\Omega^\flat Y}(\d_\pi X)+\d_\pi(\Omega(X,Y))
\\&\qquad =
[X,Y]_N-i_{\Omega^\flat X}(\mathcal{L}_Y\pi)+i_{\Omega^\flat Y}(\mathcal{L}_X\pi)-\pi^\sharp\d(\Omega(X,Y))
\\&\qquad=
[X,Y]_N-\mathcal{L}_Y(i_{\Omega^\flat X}\pi)+i_{\mathcal{L}_Y(\Omega^\flat X)}\pi+\mathcal{L}_X(i_{\Omega^\flat Y}\pi)
-i_{\mathcal{L}_X(\Omega^\flat Y)}\pi-\pi^\sharp\d(\Omega(X,Y))
\\&\qquad=
[X,Y]_N+[\pi^\sharp\Omega^\flat X,Y]+[X,\pi^\sharp\Omega^\flat Y]+\pi^\sharp\left(\mathcal{L}_Y(\Omega^\flat X)-\mathcal{L}_X(\Omega^\flat Y)
-\d(\Omega(X,Y))\right).
\end{aligned}
$$
Now, the last three terms are simply given by 
$$
\begin{aligned}
&\mathcal{L}_Y(i_X\Omega)-\mathcal{L}_X(i_Y\Omega)-(\d\circ i_Y)i_X\Omega=
( i_Y\circ\d)i_X\Omega-\mathcal{L}_X(i_Y\Omega)\\
&\qquad=
i_Y\left((\d\circ i_X)\Omega\right)-i_{[X,Y]}\Omega-i_Y(\mathcal{L}_X\Omega)=
-i_Yi_X\d\Omega-i_{[X,Y]}\Omega=-\Omega^\flat[X,Y]
\end{aligned}
$$
since $\Omega$ is closed. Hence we have shown that the vector field component of the left-hand side of (\ref{Courant-bracket2}) is 
$$
[X,Y]_N+[\pi^\sharp\Omega^\flat X,Y]+[X,\pi^\sharp\Omega^\flat Y]-\pi^\sharp\Omega^\flat[X,Y]=[X,Y]_{N+\pi^\sharp\Omega^\flat}
=[X,Y]_{\widehat N}.
$$
The 1-form component of the left-hand side of (\ref{Courant-bracket2}) is 
\begin{equation}
\label{1-form-comp}
[\Omega^\flat X,\Omega^\flat Y]_\pi+i_{X\wedge Y}\phi-i_{Y}\d_N i_X\Omega+i_{X}\d_N i_Y\Omega-\d_N(\Omega(X,Y)).
\end{equation}
Using Lemma \ref{lem:Koszul} for the first term and Lemma \ref{lem:dN-Omega} for the last three, the sum (\ref{1-form-comp}) turns out to be
$$
\begin{aligned}&
\Omega^\flat [X,Y]_{\pi^\sharp\Omega^\flat}+\frac12 i_{X\wedge Y}[\Omega,\Omega]_\pi+i_{X\wedge Y}\phi+i_{X\wedge Y}\d_N\Omega
+i_{[X,Y]_N}\Omega\\&\quad
= \Omega^{\flat}[X, Y]_{\widehat N}+ i_{X\wedge Y}\left(\phi+\d_{N} \Omega+\frac{1}{2}[\Omega, \Omega]_\pi\right).
\end{aligned}
$$
\end{proof}

We can now 
prove Theorem \ref{thm:Courant}, i.e., 
that the quasi-Lie bialgebroid $((T^*\M)_\pi,\d_L,\varphi)$ coincides 
with
$((T^*\M)_\pi,\d_{\widehat N},\widehat\phi)$, where $\widehat N=N+\pi^\sharp\,\Omega^{\flat}$ and 
$
\widehat\phi=\phi+\d_N\Omega+\frac{1}{2}[\Omega,\Omega]_\pi
$.
First of all, we notice that the identification (\ref{eq:identification}) in this case is simply 
$\tilde X=X+\Omega^\flat X$, where $X\in\Gamma(T\M)$.
By definition (\ref{eq:phi-Courant}) of $\varphi$ and using Lemma \ref{lem:Courant-bracket}, we have that
\begin{align*}
    \varphi(
    X,
    Y,
    Z)&=2 \langle\llbracket \tilde X,\tilde Y\rrbracket, \tilde Z \rangle_E\\
    &=2 \langle\llbracket X + 
    \Omega^{\flat}X,Y + 
    \Omega^{\flat}Y\rrbracket, Z + 
    \Omega^{\flat}Z \rangle_E\\
    &=2 \left\langle [X, Y]_{\widehat N}+ \Omega^{\flat}[X, Y]_{\widehat N}+ i_{X\wedge Y}\left(\phi+\d_{N} \Omega
    +\frac{1}{2}[\Omega, \Omega]_\pi\right),Z + 
    \Omega^{\flat}Z \right\rangle_E\\
    &=\left\langle  \Omega^{\flat}Z,[X, Y]_{\widehat N}\right\rangle+
    \left\langle \Omega^{\flat}[X, Y]_{\widehat N}+ i_{X\wedge Y}\left(\phi+\d_{N} \Omega
    +\frac{1}{2}[\Omega, \Omega]_\pi\right),Z \right\rangle\\
    &=\left\langle i_{X\wedge Y}\left(\phi+\d_{N} \Omega
    +\frac{1}{2}[\Omega, \Omega]_\pi\right),Z \right\rangle\\
    &=\widehat\phi(X,Y,Z)
    \end{align*}
for all $X,Y,Z\in\Gamma(T\M)$. So we are left with showing that $\d_L$ acts as $\d_{\widehat{N}}$. But this immediately follows from
$$
\rho(X + \Omega^{\flat}X)=NX+\pi^\sharp \Omega^{\flat}X=\widehat NX
\qquad\mbox{and}\qquad
[X + 
    \Omega^{\flat}X,Y + 
    \Omega^{\flat}Y]_L=[X,Y]_{\widehat N}+\Omega^{\flat}[X,Y]_{\widehat N}.
$$
Indeed, if $\psi\in\Gamma(\bigwedge^k A)$ corresponds to $\tilde\psi\in\Gamma(\bigwedge^k L^*)$,
then from (\ref{eq:dL}) we obtain
$$
\begin{aligned}
	(\d_L 
	\psi)(X_1,\ldots,X_{k+1})&= \sum_{i=1}^{k+1}(-1)^{i+1}\rho({\tilde X}_i) \left(\tilde
	\psi({\tilde X}_1,\ldots,\hat{{\tilde X}}_i,\ldots,{\tilde X}_{k+1}) \right) \\
	&\ \ +\sum_{i <j} (-1)^{i+j}
	\tilde\psi([{\tilde X}_i,{\tilde X}_j]_L, {\tilde X}_1 \ldots, \hat{{\tilde X}}_i, \ldots ,\hat{{\tilde X}}_j, \ldots , {\tilde X}_{k+1})\\
	&= \sum_{i=1}^{k+1}(-1)^{i+1}\left(\widehat N X_i\right) \left(\psi({X}_1,\ldots,\hat{{X}}_i,\ldots,{X}_{k+1}) \right) \\
	&\ \ +\sum_{i <j} (-1)^{i+j}
	\psi([{X}_i,{X}_j]_{\widehat N}, {X}_1 \ldots, \hat{{X}}_i, \ldots ,\hat{{X}}_j, \ldots , {X}_{k+1})\\
	&=(\d_{\widehat N} 
	\psi)(X_1,\ldots,X_{k+1}).
\end{aligned}
$$

\section*{Appendix A: Definition of Courant algebroid}
\renewcommand{\theequation}{A\arabic{equation}}
 \setcounter{equation}{0}

In this appendix we will recall the definition of Courant algebroid following \cite{LWX} --- see also \cite{Courant1990}, where this definition appeared for the first time. To this end, let $\mathcal M$ be a manifold. 
A Courant algebroid (over $\mathcal M$) is a quadruplet $(E,\langle\cdot,\cdot\rangle_E,\llbracket\cdot,\cdot\rrbracket,\rho)$, where
\begin{enumerate}
\item[(1)] $E$ is a vector bundle over $\mathcal M$;
\item[(2)] $\langle\cdot,\cdot\rangle_E:\Gamma(E)\times\Gamma(E)\rightarrow\mathbb R$ is a symmetric, non-degenerate, and $C^\infty(\mathcal M)$-bilinear form;
\item[(3)] $\llbracket\cdot,\cdot\rrbracket:\Gamma(E)\times\Gamma(E)\rightarrow\Gamma(E)$ is an $\mathbb R$-bilinear, skew-symmetric bracket and
\item[(4)] $\rho:E\rightarrow TM$ is a bundle-map, called the \emph{anchor} of the Courant algebroid, inducing the $\mathbb R$-linear operator $\mathcal D:C^\infty(M)\rightarrow\Gamma(E)$, via the 
formula
\begin{equation}
\langle\mathcal Df,A\rangle_E:=\frac{1}{2}\rho(A)(f),\qquad\forall A\in\Gamma(E),\;f\in C^\infty(\mathcal M),\label{eq:calD}
\end{equation}
\end{enumerate}
satisfying the following compatibility conditions. For all $A,B,C\in\Gamma(E)$ and $f,g\in C^\infty(\mathcal M)$,
\begin{enumerate}
\item[(i)] $\rho$ is \emph{bracket-compatible}, i.e., $\rho(\llbracket A,B\rrbracket)=[\rho(A),\rho(B)]$;
\item[(ii)]
$
\llbracket\llbracket A,B\rrbracket,C\rrbracket+\llbracket\llbracket B,C\rrbracket,A\rrbracket+\llbracket\llbracket C,A\rrbracket,B\rrbracket
=\frac{1}{3}\mathcal D\big(\langle\llbracket A,B\rrbracket,C\rangle_E+\langle\llbracket B,C\rrbracket,A\rangle_E
+\langle\llbracket C,A\rrbracket,B\rangle_E\big);
$
\item[(iii)] $\llbracket A,fB\rrbracket=f\llbracket A,B\rrbracket+\rho(A)(f)B-\langle A,B\rangle_E\,\mathcal D(f)$;
\item[(iv)] $\langle\mathcal D(f),\mathcal D(g)\rangle_E=0$;
\item[(v)] $\rho(A)\langle B,C\rangle_E=\langle\llbracket A,B\rrbracket+\mathcal D\langle A,B\rangle_E,C\rangle_E+\langle B,\llbracket A,C\rrbracket+\mathcal D\langle A,C\rangle_E\rangle_E$.
\end{enumerate}

A few comments are in order.
\begin{remark} Note that:
\begin{itemize}
\item
as shown in \cite{Uchino2002}, conditions (iii) and (iv) follow from the other conditions;
\item $\mathcal D$ defined in \eqref{eq:calD} is a differential operator in the sense that it satisfies the Leibniz identity;
\item $\llbracket\cdot,\cdot\rrbracket$ is not a Lie bracket since it does not satisfies the Jacobi's identity, see item $(ii)$ of the previous list.
\end{itemize}
\end{remark}

\begin{example}[Quadratic Lie algebras]
A Courant algebroid over a point, i.e., if $\mathcal M=\{pt\}$, is the same as a quadratic Lie algebra, i.e., a Lie algebra endowed with a symmetric, non-degerate and $ad$-invariant bilinear form. In fact, if $\mathcal M=\{pt\}$, then $\rho=0$, which forces the condition $\mathcal D=0$. In this way $\llbracket\cdot,\cdot\rrbracket$ becomes a Lie bracket and $\langle\cdot,\cdot\rangle_E$ becomes a non-degenerate, symmetric and $ad$-invariant bilinear form, see items (ii) and 
(v) above.
\end{example}
\begin{example}[Standard Courant algebroid]\label{ex:standardCou} In this case $E=\mathbb T\mathcal M=T\mathcal M\oplus T^\ast\mathcal M$, $\rho:E\rightarrow T\mathcal M$ is the projection on the first summand, and $\langle X+\alpha,Y+\beta\rangle_E=\frac{1}{2}(\langle\alpha,Y\rangle+\langle\beta,X\rangle)$. In particular, $\langle\mathcal D f,X+\alpha\rangle_E=\frac{1}{2}X(f)$. Moreover,
\begin{eqnarray}\label{eq:bbcourant}
\llbracket X+\alpha,Y+\beta\rrbracket&=&[X,Y]+\mathcal L_X\beta-\mathcal L_Y\alpha+\frac{1}{2}\d(i_Y\alpha-i_X\beta)\\
&=&[X,Y]+i_X\d\beta-i_Y\d\alpha+\frac{1}{2}\d(i_X\beta-i_Y\alpha)\nonumber
\end{eqnarray}
for all $f\in C^\infty(\mathcal M)$ and $X+\alpha,Y+\beta\in\Gamma(E)$. Note that the pair $(T\mathcal M,T^\ast \mathcal M)$ forms a Lie bialgebroid, i.e., a Lie quasi-bialgebroid such that $\phi=0$, see Definition \ref{def:qLba}.
It turns out that $T\mathcal M$ carries the structure of Lie algebroid defined by the standard Lie brackets on vector fields and by the identity as anchor map, while $T^\ast\mathcal M$ carries the trivial Lie algebroid structure, i.e., with zero Lie bracket and zero anchor map.

The bracket (\ref{eq:bbcourant}) can be modified by twisting it with the term $i_{X\wedge Y}\phi$, where $\phi$ is any closed 3-form on $\M$. The resulting structure is called {\em twisted} Courant algebroid and it is denoted with $\mathbb T\M^\phi$. Twisted Courant algebroids were introduced by \u{S}evera in \cite{Severa}, who proved that 
a Courant algebroid $E$ fits into the exact sequence 
\[
0\longrightarrow T^\ast\M\stackrel{\rho^\ast}{\longrightarrow} E\stackrel{\rho}\longrightarrow T\M\longrightarrow 0,
\]
if and only if $E$ is isomorphic to $\mathbb T\M^\phi$ for some closed 3-form. In the exact sequence $\rho$ denotes the anchor of $E$.
\end{example}

\section*{Appendix B: Proof of Lemma \ref{lem:Koszul}} 
\renewcommand{\theequation}{B\arabic{equation}}
 \setcounter{equation}{0}
We will now prove that, for all vector fields $X,Y$, the 
identity 
\[
[\Omega^\flat X,\Omega^\flat Y]_\pi=\Omega^\flat [X,Y]_{\pi^\sharp\Omega^\flat}+\frac12 i_{X\wedge Y}[\Omega,\Omega]_\pi
\]
holds true.
Since $\d\Omega=0$, we can use (\ref{eq:monella}) to compute $[\Omega,\Omega]_\pi=\d_{\pi^\sharp\Omega^\flat}\Omega$. Then 
(\ref{def:d_N}) entails that
\begin{equation}
\label{koszul-dorfman}
[\Omega,\Omega]_\pi(X,Y,Z)=\sum_{\circlearrowleft(X,Y,Z)}\left({\mathcal L}_{\pi^\sharp\Omega^\flat Z}\left(\Omega(X,Y)\right)
-\Omega\left([X,Y]_{\pi^\sharp\Omega^\flat},Z\right)\right)
\end{equation}
for any vector fields $X$, $Y$, and $Z$. To show that (\ref{eq:Koszul}) holds, we define
$$
A(X,Y,Z)=\langle[\Omega^\flat X,\Omega^\flat Y]_\pi-\Omega^\flat [X,Y]_{\pi^\sharp\Omega^\flat},Z\rangle
$$
and we first show that $A(X,Y,Y)=0$, so that $A(X,Y,Z)=-A(X,Z,Y)$ for all $X,Y,Z\in\Gamma(T\M)$. Indeed,
\begin{equation*}
\begin{aligned}
A(X,Y,Y)
     &=\left\langle {\mathcal L}_{\pi^\sharp\Omega^\flat X}(\Omega^\flat Y)- {\mathcal L}_{\pi^\sharp\Omega^\flat Y}(\Omega^\flat X)
-\d\langle\Omega^\flat Y,\pi^\sharp\Omega^\flat X\rangle\right.
\\&\quad\left.-\Omega^\flat\left([\pi^\sharp\Omega^\flat X,Y]+[X,\pi^\sharp\Omega^\flat Y]
-\pi^\sharp\Omega^\flat [X,Y]\right),Y\right\rangle\\
     &={\mathcal L}_{\pi^\sharp\Omega^\flat X}\langle \Omega^\flat Y,Y\rangle
     -\cancel{\langle\Omega^\flat Y,[\pi^\sharp\Omega^\flat X,Y]\rangle}
-{\mathcal L}_{\pi^\sharp\Omega^\flat Y}\langle \Omega^\flat X,Y\rangle
+\langle\Omega^\flat X,[\pi^\sharp\Omega^\flat Y,Y]\rangle
\\
&\quad -{\mathcal L}_Y\langle\Omega^\flat Y,\pi^\sharp\Omega^\flat X\rangle
+\cancel{\langle\Omega^\flat Y,[\pi^\sharp\Omega^\flat X,Y]\rangle}
+\langle\Omega^\flat Y,[X,\pi^\sharp\Omega^\flat Y]\rangle
-\langle\Omega^\flat \pi^\sharp\Omega^\flat Y,[X,Y]\rangle\\
    &= -{\mathcal L}_{\pi^\sharp\Omega^\flat Y}\langle \Omega^\flat X,Y\rangle-\langle\Omega^\flat [\pi^\sharp\Omega^\flat Y,Y],X\rangle
-{\mathcal L}_Y\langle\Omega^\flat Y,\pi^\sharp\Omega^\flat X\rangle
-\langle\Omega^\flat Y,{\mathcal L}_{\pi^\sharp\Omega^\flat Y}X\rangle\\
&\quad 
+\langle\Omega^\flat \pi^\sharp\Omega^\flat Y,{\mathcal L}_YX\rangle\\
 &=-\cancel{{\mathcal L}_{\pi^\sharp\Omega^\flat Y}\langle \Omega^\flat X,Y\rangle}
-\langle\Omega^\flat [\pi^\sharp\Omega^\flat Y,Y],X\rangle
-\bcancel{{\mathcal L}_Y\langle\Omega^\flat Y,\pi^\sharp\Omega^\flat X\rangle}
-\cancel{{\mathcal L}_{\pi^\sharp\Omega^\flat Y}\langle\Omega^\flat Y,X\rangle}
\\
&\quad 
+\langle{\mathcal L}_{\pi^\sharp\Omega^\flat Y}(\Omega^\flat Y),X\rangle
+\bcancel{{\mathcal L}_Y\langle\Omega^\flat \pi^\sharp\Omega^\flat Y,X\rangle}
-\langle{\mathcal L}_Y(\Omega^\flat \pi^\sharp\Omega^\flat Y),X\rangle
\\
&=\langle -\Omega^\flat [\pi^\sharp\Omega^\flat Y,Y]+{\mathcal L}_{\pi^\sharp\Omega^\flat Y}(\Omega^\flat Y)
-{\mathcal L}_Y(\Omega^\flat \pi^\sharp\Omega^\flat Y) ,X\rangle,
\end{aligned}
\end{equation*}
and we have that
\begin{equation*}
\begin{aligned}
 -\Omega^\flat [\pi^\sharp\Omega^\flat Y,Y]+{\mathcal L}_{\pi^\sharp\Omega^\flat Y}(\Omega^\flat Y)
-{\mathcal L}_Y(\Omega^\flat \pi^\sharp\Omega^\flat Y) 
&=-i_{{\mathcal L}_{\pi^\sharp\Omega^\flat Y}Y}(\Omega)+{\mathcal L}_{\pi^\sharp\Omega^\flat Y}(i_Y\Omega)
-{\mathcal L}_Y(i_{\pi^\sharp\Omega^\flat Y}\Omega)\\
&=i_Y\left({\mathcal L}_{\pi^\sharp\Omega^\flat Y}\Omega\right)-(\d\circ i_Y+i_Y\circ\d)\left(i_{\pi^\sharp\Omega^\flat Y}\Omega\right)\\
&=i_Y\left({\mathcal L}_{\pi^\sharp\Omega^\flat Y}\Omega\right)
-\left(i_Y\circ\d\circ i_{\pi^\sharp\Omega^\flat Y}\right)\Omega\\
&=i_Y\left({\mathcal L}_{\pi^\sharp\Omega^\flat Y}\Omega\right)
-i_Y\left({\mathcal L}_{\pi^\sharp\Omega^\flat Y}\Omega- i_{\pi^\sharp\Omega^\flat Y}\d\Omega\right),
\end{aligned}
\end{equation*}
which vanishes if $\Omega$ is closed. Now,
\begin{equation*}
\begin{aligned}
2A(X,Y,Z)&=
A(X,Y,Z)-A(X,Z,Y)
\\
     &=
     \left\langle {\mathcal L}_{\pi^\sharp\Omega^\flat X}(\Omega^\flat Y)- {\mathcal L}_{\pi^\sharp\Omega^\flat Y}(\Omega^\flat X)
-\d\langle\Omega^\flat Y,\pi^\sharp\Omega^\flat X\rangle 
-\Omega^\flat[X,Y]_{\pi^\sharp\Omega^\flat},Z\right\rangle\\
&\quad-
\left\langle {\mathcal L}_{\pi^\sharp\Omega^\flat X}(\Omega^\flat Z)- {\mathcal L}_{\pi^\sharp\Omega^\flat Z}(\Omega^\flat X)
-\d\langle\Omega^\flat Z,\pi^\sharp\Omega^\flat X\rangle
-\Omega^\flat[X,Z]_{\pi^\sharp\Omega^\flat},Y\right\rangle\\
     &=     {\mathcal L}_{\pi^\sharp\Omega^\flat X}\langle \Omega^\flat Y,Z\rangle
     -\langle\Omega^\flat Y,[\pi^\sharp\Omega^\flat X,Z]\rangle
    -{\mathcal L}_{\pi^\sharp\Omega^\flat Y}\langle \Omega^\flat X,Z\rangle
     +\langle\Omega^\flat X,[\pi^\sharp\Omega^\flat Y,Z]\rangle
 \\&\quad -\left\langle\d\langle\Omega^\flat Y,\pi^\sharp\Omega^\flat X\rangle,Z\right\rangle
     -\Omega\left([X,Y]_{\pi^\sharp\Omega^\flat},Z\right)\\
&\quad -\langle {\mathcal L}_{\pi^\sharp\Omega^\flat X}(\Omega^\flat Z),Y\rangle
    +{\mathcal L}_{\pi^\sharp\Omega^\flat Z}\langle \Omega^\flat X,Y\rangle
     -\langle\Omega^\flat X,[\pi^\sharp\Omega^\flat Z,Y]\rangle
 \\&\quad +\left\langle\d\langle\Omega^\flat Z,\pi^\sharp\Omega^\flat X\rangle,Y\right\rangle
     +\Omega\left([X,Z]_{\pi^\sharp\Omega^\flat},Y\right)\\
&=     {\mathcal L}_{\pi^\sharp\Omega^\flat X}\left(\Omega(Y,Z)\right)
+  {\mathcal L}_{\pi^\sharp\Omega^\flat Y}\left(\Omega(Z,X)\right)
+  {\mathcal L}_{\pi^\sharp\Omega^\flat Z}\left(\Omega(X,Y)\right)
\\&\quad -\Omega\left([X,Y]_{\pi^\sharp\Omega^\flat},Z\right)
-\Omega\left([Z,X]_{\pi^\sharp\Omega^\flat},Y\right)
\\&\quad  -\langle\Omega^\flat Y,[\pi^\sharp\Omega^\flat X,Z]\rangle
     +\langle\Omega^\flat X,[\pi^\sharp\Omega^\flat Y,Z]\rangle
-\left\langle\d\langle\Omega^\flat Y,\pi^\sharp\Omega^\flat X\rangle,Z\right\rangle
\\&\quad 
-\langle {\mathcal L}_{\pi^\sharp\Omega^\flat X}(\Omega^\flat Z),Y\rangle
     -\langle\Omega^\flat X,[\pi^\sharp\Omega^\flat Z,Y]\rangle
+\left\langle\d\langle\Omega^\flat Z,\pi^\sharp\Omega^\flat X\rangle,Y\right\rangle.
          \end{aligned}
\end{equation*}
The last six terms are equal to
\begin{equation*}
\begin{aligned}
&\langle\Omega^\flat Y,{\mathcal L}_Z(\pi^\sharp\Omega^\flat X)\rangle
     -\Omega\left([\pi^\sharp\Omega^\flat Y,Z]+[Y,\pi^\sharp\Omega^\flat Z],X\right)
-{\mathcal L}_Z\langle\Omega^\flat Y,\pi^\sharp\Omega^\flat X\rangle
\\&\quad 
-\cancel{\langle (\d\circ i_{\pi^\sharp\Omega^\flat X})(i_Z\Omega),Y\rangle}
-\langle (i_{\pi^\sharp\Omega^\flat X}\circ\d)(i_Z\Omega),Y\rangle
+\cancel{\left\langle\d\langle i_Z\Omega,\pi^\sharp\Omega^\flat X\rangle,Y\right\rangle}
\\&=-\langle{\mathcal L}_Z(\Omega^\flat Y),\pi^\sharp\Omega^\flat X\rangle
     -\Omega\left([\pi^\sharp\Omega^\flat Y,Z]+[Y,\pi^\sharp\Omega^\flat Z],X\right)
+\langle (i_{Y}\circ\d)(i_Z\Omega),\pi^\sharp\Omega^\flat X\rangle
\\&=-\langle{\mathcal L}_Z(i_Y\Omega),\pi^\sharp\Omega^\flat X\rangle
     -\Omega\left([\pi^\sharp\Omega^\flat Y,Z]+[Y,\pi^\sharp\Omega^\flat Z],X\right)
+\langle i_{Y}({\mathcal L}_Z\Omega-i_Z(\d\Omega)),\pi^\sharp\Omega^\flat X\rangle
\\&=-\langle i_{[Z,Y]}\Omega,\pi^\sharp\Omega^\flat X\rangle
     -\Omega\left([\pi^\sharp\Omega^\flat Y,Z]+[Y,\pi^\sharp\Omega^\flat Z],X\right)
\\&= - \Omega\left([Y,Z]_{\pi^\sharp\Omega^\flat},X\right),
        \end{aligned}
\end{equation*}
where we have used again the fact that $\d\Omega=0$. So we conclude that 
$$
2A(X,Y,Z)= \sum_{\circlearrowleft(X,Y,Z)}\left({\mathcal L}_{\pi^\sharp\Omega^\flat X}\left(\Omega(Y,Z)\right)-\Omega\left([X,Y]_{\pi^\sharp\Omega^\flat},Z\right)\right)
.
$$
The comparison between this formula and (\ref{koszul-dorfman}) ends the proof.

\thebibliography{99}

\bibitem{Antunes2008}
Antunes, P., {\it Poisson quasi-Nijenhuis structures with background}, Lett. Math. Phys. {\bf 86} (2008), 33--45.

%

\bibitem{BKM2022}
{Bolsinov, A.V., Konyaev, A.Yu., Matveev, V.S.}, {\it Nijenhuis geometry}, {Adv. Math.} {\bf 394} (2022), 52 pages.


\bibitem{Bressler} Bressler, P., {\it The first Pontryagin class}, Compos. Math. {\bf 143} (2007), 1127--1163.

\bibitem{BursztynDrummond2019}  Bursztyn, H., Drummond, T.,
{\it Lie theory of multiplicative tensors}, Math.\ Ann.\ {\bf 375} (2019), 1489--1554.

\bibitem{BursztynDrummondNetto2021} Bursztyn, H., Drummond, T., Netto, C.,
{\it Dirac structures and Nijenhuis operators}, Math. Z. {\bf 302} (2022), 875--915.

\bibitem{C-NdC-2010} Cordeiro, F., Nunes da Costa, J.M.,
{\it Reduction and construction of Poisson quasi-Nijenhuis manifolds with background}, 
Int. J. Geom. Methods Mod. Phys. {\bf 7} (2010), 539--564.

\bibitem{Courant1990}
Courant, T., {\it Dirac manifolds}, Trans. Amer. Math. Soc. {\bf 319} (1990), 631--661.

%
%
%


\bibitem{Dorfman1987}
Dorfman, I.Ya., {\it Dirac structures of integrable evolution equations}, Physics Letters {\bf A 125} (1987), 240--246.

\bibitem{Dorfman-book}
Dorfman, I., {\it Dirac structures and integrability of nonlinear evolution equations}, John Wiley \& Sons, Chichester, 1993.


\bibitem{FMOP2020} Falqui, G., Mencattini, I., Ortenzi, G., Pedroni, M.,
{\it Poisson Quasi-Nijenhuis Manifolds and the Toda System\/}, Math.\ Phys.\ Anal.\ Geom.\ {\bf 23} (2020), 17 pages.

\bibitem{FMP2023} Falqui, G., Mencattini, I., Pedroni, M.,
{\it Poisson quasi-Nijenhuis deformations of the canonical PN structure\/}, J.\ Geom.\ Phys.\  {\bf 186} (2023), 10 pages.


 
\bibitem{Gualtieri} Gualtieri, M., {\it Generalized complex geometry}, Ann. of Math. {\bf 174} (2011), 75--123.

\bibitem{ILX} Iglesias-Ponte, D., Laurent-Gengoux, C., Xu, P., {\it Universal lifting theorem and quasi-Poisson groupoids}, 
J. Eur. Math. Soc. 
{\bf 14} (2012), 681--731. 

\bibitem{YKS96} Kosmann-Schwarzbach, Y., {\it The Lie Bialgebroid of a Poisson-Nijenhuis Manifold}, Lett. Math. Phys. {\bf 38} (1996), 421--428.

\bibitem{KM} Kosmann-Schwarzbach, Y., Magri, F., {\it Poisson-Nijenhuis structures}, Ann. Inst. Henri Poincar\'e {\bf 53} (1990), 35--81.




\bibitem{LWX} Liu, Z-J., Weinstein, A., Xu, P., {\it Manin Triples for Lie Bialgebroids}, J. Differential Geom. {\bf 45} (1997), 547--574.

\bibitem{libro-Mackenzie} Mackenzie, K.C.H., {\it General Theory of Lie Groupoids and Lie Algebroids}, London Mathematical Society Lecture Note Series, Cambridge University Press, 2005. 

\bibitem{MX1994} Mackenzie, K.C.H., Xu, P., {\it Lie bialgebroids and Poisson groupoids}, Duke Math. J. {\bf 73} (1994), 415--452.


\bibitem{MagriMorosiRagnisco85} Magri, F., Morosi, C., Ragnisco, O.,
{\it Reduction techniques for infinite-dimensional Hamiltonian systems: some ideas and applications},
Comm. Math. Phys. {\bf 99} (1985), 115--140. 

\bibitem{Marle2008} Marle, C.-M.,
{\it Calculus on Lie algebroids, Lie groupoids and Poisson manifolds},
Dissertationes Math. {\bf 457} (2008), 57 pages. 


\bibitem{Nijenhuis51} 
Nijenhuis, A., {\it $X_{n-1}$-forming sets of eigenvectors}, Proc. Kon. Ned. Akad. Amsterdam {\bf 54} (1951), 200--212.

%
%

\bibitem{Roytenberg2002}
Roytenberg, D., {\it Quasi-{L}ie bialgebroids and twisted {P}oisson manifolds}, {Lett. Math. Phys.} {\bf 61} (2002), {123--137}.

\bibitem{Severa} \u{S}evera, P., {\it Letters to Alan Weinstein about Courant algebroids}, arXiv:1707.00265.

\bibitem{SX} Sti\'enon, M., Xu, P., {\it Poisson Quasi-Nijenhuis Manifolds}, Commun. Math. Phys. {\bf 270} (2007), 709--725.


\bibitem{Uchino2002} Uchino, K., {\it Remarks on the definition of a Courant algebroid}, Lett. Math. Phys. {\bf 60} (2002), 171--175.

%

\end{document}